\newtheorem{theorem}{\bf Theorem}[section]
\newtheorem{proposition}[theorem]{\bf Proposition}
\newtheorem{remark}{\bf Remark}[section]
\newtheorem{lemma}[theorem]{\bf Lemma}
\numberwithin{equation}{section}
\newcommand{\beq}{\begin{equation}}
	\newcommand{\eeq}{\end{equation}}
\newcommand{\ben}{\begin{eqnarray}}
	\newcommand{\een}{\end{eqnarray}}
\newcommand{\beno}{\begin{eqnarray*}}
	\newcommand{\eeno}{\end{eqnarray*}}
\title[Stability threshold for 2-D Poiseuille flow]{Enhanced dissipation and stability of Poiseuille flow  for two-dimensional Boussinesq system}
\author[Gaofeng Wang  ]{\sc Gaofeng Wang$^{1}$}
\address{$^1$ School of Mathematical Sciences , Shanghai Jiao Tong University, Shanghai, 200240,  P.R.China.}
\email{1462047796@qq.com}
\begin{document}
	\bibliographystyle{abbrv}
	\maketitle
	\vskip .01in
	\begin{center}
		\sc Abstract
	\end{center}
	
	\vskip .05in
	
	 We investigate the nonlinear stability problem  for the two-dimensional Boussinesq system around the Poiseuille flow in a finite channel. The system  has the characteristic of  Navier-slip boundary condition for the velocity and Dirichlet boundary condition for the temperature, with a small viscosity $\nu$ and small thermal diffusion $\mu,$ respectively.  More precisely, we prove that if the initial  velocity and initial  temperature satisfies$$||u_{0}-(1-y^2,0) ||_{H^{\frac{7}{2}+}}\leq c_0\min\left\lbrace \mu,\nu\right\rbrace ^{\frac{2}{3}}$$
	 and 
	 $$ ||\theta_{0}||_{H^1}+|||D_x|^{\frac{1}{8}}\theta_{0}||_{H^1}\leq c_1\min\left\lbrace \mu,\nu\right\rbrace ^{{\frac{31}{24}}}$$ for some small constants $c_0$ and $c_1$  which are both
	independent of $\mu,\nu$, then we can reach the conclusion that the velocity remains within $O\left( \min\left\lbrace \mu,\nu\right\rbrace ^{\frac{2}{3}}\right) $ of  the Poiseuille  flow; the temperature remains $O\left( \min\left\lbrace \mu,\nu\right\rbrace ^{\frac{31}{24}}\right) $ of the constant 0, and approaches to 0 as $t\rightarrow\infty.$

	\maketitle
	
	\titlecontents{section}[0pt]{\vspace{0\baselineskip}\bfseries}
	{\thecontentslabel\quad}{}%
	{\hspace{0em}\titlerule*[10pt]{$\cdot$}\contentspage}
	
	\titlecontents{subsection}[1em]{\vspace{0\baselineskip}}
	{\thecontentslabel\quad}{}%
	{\hspace{0em}\titlerule*[10pt]{$\cdot$}\contentspage}
	
	\titlecontents{subsubsection}[2em]{\vspace{0\baselineskip}}
	{\thecontentslabel\quad}{}%
	{\hspace{0em}\titlerule*[10pt]{$\cdot$}\contentspage}


	\vskip .3in

	\textbf{Key words:} Transition threshold, Poiseuille  flow, Resolvent estimates, Asymptotic stability
	
	\textbf{MSC 2020:} 
	35Q30,
35Q35,
76D03
	\section{Introduction}
The Boussinesq system embodies the fundamental principles of physics governing fluid dynamics driven by buoyancy. It stands as one of the most widely employed models, universally applicable in atmospheric and oceanic flows, and plays a central role in Rayleigh-Bénard convection studies  \cite{ref1,ref2,ref3,ref4}. Henceforth, it becomes evident that the Boussinesq equation holds significant mathematical significance. The two-dimensional Boussinesq equation serves as a reduced-order model for three-dimensional fluid dynamics equations. Notably, certain key characteristics of the 3D Euler and Navier-Stokes equations are preserved within the 2D Boussinesq equations, including the vortex stretching mechanism. The inviscid 2D Boussinesq equations can be identified as Euler equations for axisymmetric swirling flows in three dimensions  \cite{ref49}. Moreover, the Boussinesq equations possess distinctive features of their own and offer ample opportunities for novel discoveries.

Due to its broad range of physical applications and profound mathematical implications, the Boussinesq equations have recently garnered significant attention. The development of the mathematical theory of Boussinesq equations has been driven by two fundamental problems: the global regularity problem and the stability problem. In this discussion, we will primarily focus on stability issues. Our research aims to investigate the stability of solutions under perturbations near Poiseuille flows and their long-time behavior.
In this article, we first study the 2D  Navier-Stokes Boussinesq system in a domain  
$$\Omega=\left\lbrace (x,y): x\in  \mathbb{T},y\in(-1,1)\right\rbrace ,$$
given by 
\begin{equation}\label{1.1}
	\left\{
	\begin{aligned}
		&\partial _tv+v\cdot\nabla v-\nu \Delta v+\nabla q=\theta e_2,\\
		&\partial_t\theta+v\cdot\nabla \theta-\mu \Delta \theta=0,\\
		&\nabla\cdot v=0,\\
		&\theta(0,x,y)=\theta_{0}(x,y),v(0,x,y)=v_{0}(x,y),
	\end{aligned}
	\right.
\end{equation}
where $\nu$, $\mu$  donote the viscosity coefficient and   the thermal diffusivity, respectively. $v(t,x,y)=(v^1(t,x,y),v^2(t,x,y))$ is the two-dimensional velocity field, $q(t,x,y)$ is the pressure, $\theta$ is the temperature, $e_2=(0,1)$ is the unit vector in the vertical direction.
The first equation in \eqref{1.1} represents the incompressible Navier-Stokes equation with vertical buoyancy forcing, while the second equation describes the balance between temperature convection and diffusion. It is well-known that the 2D Navier-Stokes Boussinesq equation is globally well-posed. For further details, please refer to \cite{ref31, ref30, ref32, ref33} and references therein.

In this paper, our objective is to comprehend the stability and long-time behavior of perturbations in proximity to the Poiseuille flow, 
$$U_s=(1-y^2,0),\quad \theta_s=0,\quad \nabla Q=(Q_0,0) ~\text{for some constant } ~Q_0$$
which represents a steady state solution of equation \eqref{1.1}. 

Subsequently, we introduce the perturbation
$$u=v-(1-y^2,0),\quad \tilde{\theta}= \theta,\quad p=q-Q,$$ which satisfies
\begin{equation}\label{1.2}
	\left\{
	\begin{aligned}
		&\partial _tu+(1-y^2)\partial_x u+	\begin{pmatrix}
			-2y	u^2\\
			0
		\end{pmatrix}+u\cdot\nabla u-\nu \Delta u+\nabla p=	\begin{pmatrix}
			0\\
			\tilde{\theta}
		\end{pmatrix},\\
		&\partial_t\tilde{\theta}+u\cdot\nabla\tilde{\theta}+(1-y^2)\partial_x\tilde{\theta}-\mu \Delta \tilde{\theta}=0,\\
		&\nabla\cdot u=0,\\
		&\tilde{\theta}(0,x,y)=\tilde{\theta}_{0}(x,y),u(0,x,y)=u_{0}(x,y).\\
	\end{aligned}
	\right.
\end{equation}
For the sake of convenience, we shall denote  $\theta$ as $\tilde{\theta}$. The Navier-slip condition for the perturbation of velocity  $u$  is taken into consideration in order to mitigate the boundary layer effect. Specifically, the Dirichlet boundary condition for temperature $\theta$ is imposed, satisfying
\begin{equation}\label{1.3}
	\left\{
	\begin{aligned}
		&\partial _tu+(1-y^2)\partial_x u+	\begin{pmatrix}
			-2yu^2\\
			0
		\end{pmatrix}+u\cdot\nabla u-\nu \Delta u+\nabla p=\begin{pmatrix}
			0\\
			\theta
		\end{pmatrix},\\
		&\partial_t\theta+(1-y^2)\partial_x \theta +u\cdot\nabla \theta-\mu \Delta \theta=0,\\
		&\theta(0,x,y)=\theta_{0}(x,y),u(0,x,y)=u_{0}(x,y),\\
		&	\begin{pmatrix}
			\partial_y	u^1(t,x,\pm1)\\
			u^2(t,x,\pm1)
		\end{pmatrix}
		=\begin{pmatrix}
			0\\
			0
		\end{pmatrix},\quad \theta( t,x,\pm1)=0.
	\end{aligned}
	\right.
\end{equation}
Due to the existence of the pressure term in system$ \eqref{1.3}_1$, there are obstacles to deal with the velocity field directly. 
By takng the curl $(\nabla\times )$ on both sides of $\eqref{1.3}_1$, not only can the pressure term be eliminated, but also the vector equation can be transformed into a scalar equation.
The vorticity $\omega$ and the
stream function $\varphi$	 are defined by
$$\omega=\nabla\times u=\partial_y u^1-\partial_x u^2,\quad u=\nabla^{\perp}\varphi=(\partial_y\varphi,-\partial_x \varphi),\quad \Delta \varphi=\omega,$$which satisfies
\begin{equation}\label{1.4}
	\left\{
	\begin{aligned}
		&\partial _t\omega+(1-y^2)\partial_x\omega +2\partial_x\Delta^{-1}\omega+u\cdot\nabla \omega-\nu \Delta \omega=-\partial_x\theta ,\\
		&\partial_t\theta+(1-y^2)\partial_x \theta +u\cdot\nabla \theta-\mu \Delta \theta=0,\\
		&\theta(0,x,y)=\theta_{0}(x,y),u(0,x,y)=u_{0}(x,y),\\
		&\omega(t,x,\pm1)=0,\quad\theta(t,x,\pm1)=0.
	\end{aligned}
	\right.
\end{equation}
The stability analysis of shear flow will be presented after a comprehensive review of relevant findings.

Before stating our main result,  let us first review previous studies about the stability of steady states. The system of \eqref{1.1} has a series of steady-state solutions.
The first steady-state  of the system \eqref{1.1} is the hydrostatic equilibrium 
\begin{equation}\label{1.5}
	u_{he}=0,\quad\theta_{he}=\beta y,\quad P_{he}=\frac{1}{2}\beta y^2,
\end{equation}
which is a prominent topic in fluid dynamics and astrophysics. Remarkable progress \cite{ref5,ref6,ref7} has been made on the stability and  large-time behavior  of the steady-state \eqref{1.5}. Second, the study of steady-state about shear flow has a long history,  the earliest of which can be traced back to Taylor \cite{T1931}, Goldstein \cite{G1931} and Synge \cite{Synge1933}. 
There are many kinds of steady-states about shear flow of the system  \eqref{1.1}, such as the velocity field is the Couette flow and the temperature is a linear function of the vertical component, namely,
\begin{equation}\label{1.6}
	u_{s}=(y,0),\quad\theta_{s}=-\gamma^2 y+1,\quad P_{s}=\frac{1}{2}\gamma^2 y^2-y+C,
\end{equation}
where the constant $\gamma^2$ is the Richardson number. The Miles-Howard theorem \cite{H1961} makes sure  that  any  flow  in  the  inviscid  non-diffusive  limit  is  linearly stable if the local Richardson number exceeds the value $\dfrac{1}{4}$ everywhere. Zhai and Zhao \cite{Zhao} proved the nonlinear asymptotic stability of the steady-state \eqref{1.6} when the Richardson number $\gamma ^2\geq\dfrac{1}{4}$. The system  \eqref{1.1} has another steady-state solution when the Richardson number $\gamma ^2=0,$ namely 
\begin{equation}\label{1.7}
	u_{s}=(y,0),\quad\theta_{s}=1,\quad P_{s}=-y+C.
\end{equation}
There were quantities of works \cite{ref8,ref9,ref54,ref10,ref11,ref12} on the stability of the steady-state \eqref{1.7}  in the infinite channel case $\mathbb{T}\times\mathbb{R}.$  The best  result concerning  stability threshold is 
$$||\omega_{0}||_{H^s}\leq c_0\nu^{\frac{1}{3}},\quad||\theta_{0}||_{H^s}\leq c_0\nu^{\frac{5}{6}},\quad|||D_x|^\frac{1}{3}\theta_{0}||_{H^s}\leq c_0\nu^{\frac{2}{3}},$$
with $s>7$  when $\nu=\mu$, which was proved by Zhang and Zi \cite{ref54}. In their paper, they showed that the enhanced dissipation and non-viscous damping generated by the linear non-self-adjoint operator $y\partial_x-\nu\Delta$ in the perturbation equation prompted nonlinear stability possible.
It is well-worth mentioning that Zhao et al. \cite{ref13} proved the following stability result for  initial larger perturbations, namely
$$||\omega_{0}||_{H^s}\leq c_0\min\left\lbrace \nu,\mu\right\rbrace ^{\frac{1}{3}},\quad||\theta_{0}||_{H^s}\leq c_0\min\left\lbrace \nu,\mu\right\rbrace ^{\alpha},$$
with some s large and $\alpha\geq\frac{5}{6} $ by using time-dependent Fourier multiplier  when $\nu\neq\mu$.
The system \eqref{1.1} for the steady-state \eqref{1.7}  governing the Couette flow in a finite channel $\mathbb{T}\times \text{I}$ with non-slip boundary condition on the velocity has been extensively investigated and documented (\cite{ref14}). We can refer to this literature for further details, including the stability threshold
$$||\omega_{0}||_{H^1}\leq c_0\min\left\lbrace \nu,\mu\right\rbrace ^{\frac{1}{2}},\quad||\theta_{0}||_{H^1}+|||D_x|^{\frac{1}{6}}\theta_{0}||_{H^1}\leq c_0\min\left\lbrace \nu,\mu\right\rbrace ^{\frac{11}{12}}.$$

What lead to this stability are two main mechanisms: dissipative enhancement and inviscid damping, the latter of which is  the consequence of  mixing by the shear flow, which can refer to \cite{Bedrossian.2015,Ionescu.2019,Zhao.2020} for the inviscid damping results for Euler equation.
Due to the effect of diffusion, the  decay rate of the high frequency part is much faster than the low frequency part, and the energy is sent from the low frequency to the high frequency by the mixing, which gives dissipative enhancement.
The phenomenon of dissipative enhancement has been extensively recognized and studied in the physical literature (see \cite{ref40,ref41,ref42,ref43}). Recently, it has garnered significant attention from the mathematical community, with many researchers making notable progress. Constantin, Kiselev, Ryzhik, and Zlatos \cite{ref44} achieved one of the earliest rigorous results on the dissipative enhancement of diffusive mixing. Subsequently, numerous remarkable findings have been established. Notably, there has been intensive investigation into the stability of shear flows in relation to passive scale equations and Navier-Stokes equations through a series of outstanding papers (see \cite{ref20,ref51,ref22,ref23,ref24,ref26}).

When $\theta=0$, equation  \eqref{1.1} reduces to a two-dimensional Navier-Stokes equation. There is an extensive body of work \cite{ref15,ref16,ref17,ref18,ref19} in applied mathematics and physics dedicated to estimating transition thresholds for various flows such as Couette flow and Poiseuille flow.

Bedrossian, Germain, and Masmoudi \cite{48} once put forward a  mathematical version concerning this:
Given a norm $||\cdot||_X$, determine a $\gamma=\gamma(X)$ such that 
$$||u_0||_X<C \nu^{\gamma}\rightrightarrows stability,$$
$$||u_0||_X\gg C \nu^{\gamma}\rightrightarrows instability,$$
$\gamma$: the transition threshold in the applied literature.

The stability threshold problem in Couette flow has been extensively investigated in a series of studies  \cite{ref20,Bedrossian.2022,ref51,ref22,ref23,ref25,ref47,ref26,ref13,ref24}. In summary:

When $\Omega=\mathbb{T}\times\mathbb{R}\times\mathbb{T},$
\begin{itemize}
	\item  if the perturbation for velocity is in Gevrey class, then $\gamma=1$ \cite{ref20,Bedrossian.2022},
	\item  if the perturbation for velocity is in Sobolev class $H^{\sigma}(\sigma>\frac{9}{2})$, then $\gamma\leq\frac{3}{2}$ \cite{ref51},
	\item  if the perturbation for velocity is  in Sobolev class $H^{2}$, then $\gamma\leq 1$ \cite{ref24}. 
\end{itemize}
When $\Omega=\mathbb{T}\times[-1,1]\times\mathbb{T},$
\begin{itemize}
	\item  if the perturbation  for velocity is  in Sobolev class $H^{2}$, then $\gamma\leq 1$ \cite{ref26}.
\end{itemize}
When $\Omega=\mathbb{T}\times \mathbb{R},$
\begin{itemize}
	\item  if the perturbation for vorticity is in the Gevery class, then $\gamma=0$ \cite{ref22},
	\item  if the perturbation for velocity is in Sobolev space  $H^2$, then       $\gamma\leq\frac{1}{2}$ \cite{ref23}, 
	\item  if the perturbation for vorticity is in Sobolev space  $H_x^{log}L_y^2$, then $\gamma\leq\frac{1}{2}$ \cite{ref47},
	\item if the initial perturbation for vorticity is in the some Gevrey-$\frac{1}{s}$, then $\gamma\in [0,\frac{1}{3}]$ \cite{ref50},
	\item  if the perturbation for vorticity is in Sobolev space  $H^{\sigma}(\sigma\geq 40)$, then $\gamma\leq\frac{1}{3}$ \cite{ref13}.
\end{itemize}
When $\Omega=\mathbb{T}\times I,$
\begin{itemize}
	\item  if the perturbation for velocity is in Sobolev space  $H^2$, then       $\gamma\leq\frac{1}{2}$ \cite{ref25}.
\end{itemize}
Moreover, Coti Zelati et al. \cite{ref34}  demonstrated the transition threshold $\gamma\leq \frac{3}{4}+2\epsilon $  (for any $\epsilon>0$)  for 2D plane Poiseuille flow without boundary using the hypocoercivity method. This result has been further improved to $\gamma\leq \frac{2}{3}+$  by Del Zotto \cite{46}. Ding and Lin \cite{ref35} investigated the case with Navier-slip boundary in a finite channel $\mathbb{T}\times (-1,1),$  and established the transition threshold $\gamma\leq \frac{3}{4}$ based on the resolvent estimate, while also providing enhanced dissipation with a decay rate of  $e^{-c\sqrt{\nu}t}.$ It is worth mentioning that their results have recently been extended to $\gamma\leq\frac{2}{3}$ \cite{ref53}.

For the 3D case, significant progress has been made by several researchers. Chen, Wei, and Zhang \cite{ref36} established the linear stability of 3D pipe Poiseuille flow through resolvent estimates. Recently, Chen, Ding et al. \cite{ref52} determined the transition threshold $\gamma\leq \frac{7}{4} $ for 3D plane Poiseuille flow using resolvent estimates. Notably, Wang and Xie \cite{ref37} demonstrated the structural stability of Hagen-Poiseuille flows in a pipe under steady conditions.

In this paper, our primary focus is on investigating the Navier-slip boundary condition for velocity in order to mitigate the impact of the boundary layer effect. It should be noted that due to this specific boundary condition, we are unable to employ the time-dependent multiplier method as mentioned in \cite{ref13}. The key findings of our study are summarized below.

\begin{theorem}\label{thm1}
	Suppose  that $(u,\theta)$ be the solution the system \eqref{1.3} with the initial data $(u_{0},\theta_{0})$.  Then there exists constants $\nu_0$  and $c_0,c_1, C> 0$ independent  of $\nu$ and $\mu$ so that if 
	\begin{equation}
		||u_{0}||_{H^{\frac{7}{2}+}}\leq c_0\min\left\lbrace \mu,\nu\right\rbrace ^{\frac{2}{3}},\quad ||\theta_{0}||_{H^1}+|||D_x|^{\frac{1}{8}}\theta_{0}||_{H^1}\leq c_1\min\left\lbrace \mu,\nu\right\rbrace ^{{\frac{31}{24}}}
	\end{equation}
	for some sufficiently small $c_0,c_1,0<\min\left\lbrace \nu,\mu\right\rbrace \leq \nu_0 ,$ then the solution $(u,\theta)$ of system \eqref{1.3} is global in time and satisfies the following stability estimate:
	\begin{equation}
		\sum_{k\in\mathbb{Z}}E_k\leq Cc_0\min\left\lbrace \mu,\nu\right\rbrace ^{\frac{2}{3}},\quad\sum_{k\in\mathbb{Z}}H_k\leq Cc_1\min\left\lbrace \mu,\nu\right\rbrace ^{\frac{31}{24}},
	\end{equation}
	where the energy functional $E_k,H_k$ are defined by
	\begin{equation}
		E_k:=	\left\{
		\begin{aligned}
			&||\hat{\omega}_k||_{L^\infty L^2}+(\nu |k|)^{\frac{1}{4}}||\hat{\omega}_k||_{L^2L^2}+(\nu |k|^2)^{\frac{1}{2}}||\hat{\omega}_k||_{L^2L^2}+|k|^{\frac{1}{2}} ||\hat{u}_k||_{L^2L^2},&k\neq 0,\\
			&\|\bar{\omega}\|_{L^\infty L^2},&k=0,\\
		\end{aligned}
		\right.
	\end{equation}
	and
	\begin{equation}
		H_k:=	\left\{
		\begin{aligned}
			&|k|^{\frac{1}{8}}||\hat{\theta}_k||_{L^\infty L^2}+\mu^{\frac{1}{4}} |k|^{\frac{3}{8}}||\hat{\theta}_k||_{L^2L^2}+\mu^{\frac{1}{2}} |k|^{\frac{9}{8}}||\hat{\theta}_k||_{L^2L^2},&k\neq 0,\\
			&\|\bar{\theta}\|_{L^\infty L^2},&k=0.\\
		\end{aligned}
		\right.
	\end{equation}
	The Fourier transform of $f$ in the $x$ direction is defined by 
	$$\hat{f}(t,k,y)=\dfrac{1}{2\pi}\int_{\mathbb{T}}f(t,x,y)e^{-ikx}dx,$$ 
	and $k$ is the wave number. Let us define 
	$$\bar{f}(t;y):=\dfrac{1}{2\pi}\int_{\mathbb{T}}f(t,x,y)dx,\quad f_{\neq}=f-\bar{f},$$
	denoting  
	the zero mode and the non-zero mode.
\end{theorem}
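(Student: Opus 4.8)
The plan is to establish the stability estimates by a bootstrap (continuity) argument built on the energy functionals $\sum_k E_k$ and $\sum_k H_k$. After the Fourier transform in $x$, the linearized dynamics decouples into the family of evolution equations for $(\hat\omega_k,\hat\theta_k)$ indexed by the wave number $k$, and I would treat the zero mode and the non-zero modes separately. For $k=0$ the shear transport and the buoyancy coupling $-\partial_x\theta$ both vanish, so $\bar\omega$ and $\bar\theta$ satisfy heat-type equations driven only by the nonlinearity; their $L^\infty L^2$ bounds follow from direct energy estimates once the non-zero modes are under control. Writing $M:=\min\{\mu,\nu\}$, the core of the argument is the $k\neq0$ analysis: I would assume a priori that $\sum_k E_k\le 2Cc_0 M^{2/3}$ and $\sum_k H_k\le 2Cc_1 M^{31/24}$ and seek to recover the same bounds with $2C$ replaced by $C$, which by continuity in time closes the estimate globally.

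The linear backbone is a resolvent estimate for the non-self-adjoint operator attached to the Poiseuille shear. Rewriting the vorticity equation as $\partial_t\hat\omega_k = L_k\hat\omega_k - ik\hat\theta_k + N_k$, where $L_k=-ik(1-y^2)-2ik\Delta_k^{-1}+\nu\Delta_k$ (with $\Delta_k=\partial_y^2-k^2$) under the Navier-slip condition and $N_k$ collects the transport nonlinearity, I would bound the resolvent $(L_k-\lambda)^{-1}$ uniformly on a suitable vertical strip and invoke a Gearhart--Pr\"uss/Wei-type argument to obtain enhanced dissipation at rate $\sqrt{\nu|k|}$, consistent with the $e^{-c\sqrt\nu t}$ decay known for Poiseuille flow. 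This degraded rate --- compared with the Couette rate $\nu^{1/3}|k|^{2/3}$ --- originates from the critical point $U'(0)=0$ at the channel center, and is exactly the gain recorded by the weight $(\nu|k|)^{1/4}$ on $\|\hat\omega_k\|_{L^2L^2}$ in $E_k$, while $|k|^{1/2}\|\hat u_k\|_{L^2L^2}$ encodes the inviscid damping of the velocity. The analogous resolvent estimate for $-ik(1-y^2)+\mu\Delta_k$ with the Dirichlet condition produces the temperature rate $\sqrt{\mu|k|}$ and hence the weights $\mu^{1/4}|k|^{3/8}$ and $\mu^{1/2}|k|^{9/8}$ in $H_k$, the extra $|k|^{1/8}$ reflecting the $|D_x|^{1/8}$ regularity imposed on $\theta_0$. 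The genuinely new difficulty relative to Couette is the nonlocal term $2ik\Delta_k^{-1}$, which comes from $U''=-2$ and must be absorbed carefully in the resolvent bound.

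With the semigroup bounds in hand, the remaining task is to feed them through the Duhamel formula and absorb the coupling and nonlinear terms. The buoyancy source $-ik\hat\theta_k$ contributes to $E_k$ a quantity controlled by $\sum_k H_k$, while the transport terms $u\cdot\nabla\omega$ and $u\cdot\nabla\theta$ are handled by splitting each product into zero/non-zero frequency interactions and exploiting the space-time smoothing norms, with the $H^{7/2+}$ regularity supplying the room needed for the product and commutator estimates. The main obstacle I anticipate is closing the two estimates \emph{simultaneously}, since $\omega$ and $\theta$ are coupled in a feedback loop: $\theta$ forces $\omega$ through buoyancy, and $\omega$ transports $\theta$ through the velocity it generates. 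The exponents $2/3$ and $31/24$ must be chosen so that this loop is a strict contraction; concretely, the enhanced-dissipation gain forces $\theta$ to sit a factor $M^{5/8}$ below $\omega$ (the gap $31/24-2/3=5/8$), and verifying that every nonlinear and coupling contribution is bounded by a small constant times $M^{2/3}$ (respectively $M^{31/24}$) --- so that the bootstrap constant strictly improves --- is where the delicate balancing of the frequency weights against the powers of $\nu$ and $\mu$ has to be carried out.
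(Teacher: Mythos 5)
Your outline reproduces the paper's high-level architecture correctly (Fourier decomposition in $x$, zero/non-zero mode split, resolvent estimates for the Orr--Sommerfeld operator with the nonlocal term $2ik\Delta_k^{-1}$, enhanced dissipation at the Poiseuille rate $\sqrt{\nu|k|}$, and a bootstrap on $\sum_k E_k$, $\sum_k H_k$ with the gap $31/24-2/3=5/8$ correctly identified), but the linear mechanism you propose --- operator-norm decay via Gearhart--Pr\"uss followed by Duhamel --- is too weak to produce the norms that actually appear in $E_k$ and $H_k$, and this is a genuine gap rather than a stylistic difference. The paper uses Gearhart--Pr\"uss only for the \emph{homogeneous} temperature evolution (Lemma 4.4), where no Duhamel integral is needed; for everything else it converts the sharp resolvent estimates of Chen--Ding--Lin--Zhang into space-time estimates by taking the Fourier transform in $t$ and applying Plancherel (explicitly in the proof of Lemma 4.6 for $\hat\theta_I$, and via the imported Proposition 4.2 for the vorticity). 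Two concrete points where your route fails: first, the inviscid-damping norm $|k|^{1/2}\|\hat u_k\|_{L^2L^2}$ is a space-time smoothing estimate on the \emph{velocity}; a semigroup bound $\|e^{-t\mathcal{L}_k}\|\lesssim e^{-c(\nu|k|)^{1/2}t}$ on the vorticity plus Duhamel gives no route to it, since elliptic regularity $|k|\|\hat u_k\|_{L^2}\lesssim\|\hat\omega_k\|_{L^2}$ loses exactly the $|k|^{1/2}$ gain that the resolvent estimate $|k|^{1/2}\|u\|_{L^2}\lesssim \nu^{-3/8}|k|^{-5/8}\|F\|_{L^2}$ supplies. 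Second, the nonlinearities enter in divergence form, $-ikf_k^1-\partial_y f_k^2$ and $-ikg_k^1-\partial_y g_k^2$, so one must solve the resolvent problem with $H_k^{-1}$ data (the second estimate of Lemma 4.5 and its vorticity analogue); an $L^2\to L^2$ operator-norm bound cannot absorb $\partial_y f_k^2$ without semigroup smoothing estimates that Gearhart--Pr\"uss does not provide and that you do not establish.

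Beyond the linear theory, the quantitative heart of the theorem --- Proposition 5.1, i.e., the mode-by-mode inequalities $E_k\lesssim \|\Delta_k\hat\omega_0\|_{L^2}+\nu^{-3/8}\mu^{-1/4}H_k+\nu^{-2/3}\sum_l E_lE_{k-l}$ and the two regimes for $H_k$ split according to $\mu k^2\le 1$ versus $\mu k^2>1$ --- is precisely where the exponents $2/3$ and $31/24$ are earned, and your proposal defers all of it to ``delicate balancing.'' In particular you omit the low/high frequency dichotomy for the temperature (needed because the enhanced-dissipation weight $\mu^{1/4}|k|^{3/8}$ is only useful when $\mu k^2\le 1$, while the plain energy estimate of Proposition 4.2 takes over when $\mu k^2>1$), the HL/LH paraproduct split with the elementary inequality $|k|\lesssim|l||k-l|$, and the Gagliardo--Nirenberg interpolations such as $\|\hat u_k^1\|_{L^\infty L^\infty}\lesssim|k|^{-1/2}\|\hat\omega_k\|_{L^\infty L^2}$ that make the convolution sums close. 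To repair the proposal, replace the Duhamel step by the Plancherel-in-time transfer of the full family of resolvent estimates (with $L^2$, $H_k^{-1}$, and $H^1$ source data) and then carry out the product estimates of Proposition 5.1; as written, the argument cannot close.
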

The results have been subject to some critical observations.
\begin{remark}
	The asymptotic stability is valid for the initial perturbation that meets the given conditions
	$$||u_{0}||_{H^{\frac{7}{2}+}}\leq c_0\min\left\lbrace \mu,\nu\right\rbrace ^{\frac{2}{3}},\quad ||\theta_{0}||_{H^1}+|||D_x|^{\frac{1}{8}}\theta_{0}||_{H^1}\leq c_1\min\left\lbrace \mu,\nu\right\rbrace ^{{\frac{31}{24}}}.$$
\end{remark}
\begin{remark}
	The estimate 	$|k|^{\frac{1}{2}} ||\hat{u}_k||_{L^2L^2}$  is attributed to the inviscid damping, while estimates $(\nu |k|)^{\frac{1}{4}}||\hat{\omega}_k||_{L^2L^2}$ and $\mu^{\frac{1}{4}} k^{\frac{3}{8}}||\hat{\theta}_k||_{L^2L^2}$ are associated with the enhanced dissipation.
\end{remark}
\begin{remark}
	The stability of the Poiseuille flow, as described in equation  \eqref{1.4}, is facilitated by the augmented dissipation and inviscid damping generated by the non-self-adjoint operator $(1-y^2)\partial_x-\nu\Delta+2\partial_x\Delta^{-1},$ which represents the linear component of system equation  \eqref{1.4}. Early studies conducted by Hörmander  \cite{ref39,ref38} focused on this type of operators. It is well-known that for the heat equation $\partial_t f-\nu\Delta f=0,$ the dissipation time scale is $O(\nu^{-1})$; however, when incorporating a transport term into the heat equation $$\partial_t f+(1-y^2)\partial_x f-\nu\Delta f=0,$$  the dissipation time scale becomes $O(\nu^{-\frac{1}{2}})$, which is significantly faster than $O(\nu^{-1})$ for small $\nu.$ This intensified dissipation effect plays a crucial role in addressing the stability problem under investigation.
\end{remark}
\begin{remark}
	The inviscid damping effect also plays an exceedingly significant role in the stability problem. In the case of Poiseuille flow, Orr \cite{ref55} observed a crucial phenomenon where the velocity tends to approach zero as time progresses, despite the Euler equation being a conserved system.
\end{remark}
\begin{remark}
	The Navier-slip boundary condition is imposed on the velocity in this paper to mitigate the impact of the boundary layer effect, which serves as an alternative to the non-slip boundary condition.
\end{remark}
\begin{remark}
	The special implication of Theorem \ref{1.1} lies in the nonlinear stability analysis for the 2D Navier-Stokes equation. In the case where $\theta=0$, system \eqref{1.4} simplifies to the 2D Navier-Stokes vorticity equation. The outcome of Theorem \ref{1.1} aligns with \cite{ref53}, which established the transition threshold $\gamma\leq \frac{2}{3}$ based on resolvent estimation.
\end{remark}

\begin{remark}
	More
	precisely, there is one derivative loss of the buoyancy term $\partial_x\theta$. A $\frac{5}{8}$ derivative can be absorbed by using the enhanced dissipation of the vorticity, namely $(\nu |k|)^{\frac{1}{4}}||\hat{\omega}_k||_{L^2L^2}$ and the inviscid damping of the velocity, namely  $|k|^{\frac{1}{2}} ||\hat{u}_k||_{L^2L^2}.$
	A $\frac{1}{4}$ derivative can be absorbed by using the enhanced dissipation of the temperature, namely $(\mu |k|)^{\frac{1}{4}}||\hat{\theta}_k||_{L^2L^2}.$Thus, there is only $\frac{1}{8}$
	derivative loss in the buoyancy term that needs to be controlled,
	which requires the estimate of the temperature with an additional 
	$\frac{1}{8}$ derivative, namely
	the term $|k|^{\frac{1}{8}}||\hat{\theta}_k||_{L^\infty L^2}$
	in the energy.
\end{remark}
In section 2, we analyze the linearized operator of the system. In section 3, we obtained the resolvent estimates for Orr-Sommerfeld equation. Section 4 is dedicated to establishing space-time estimates for the linearized two-dimensional Boussinesq equation through resolvent estimates that have been established in section 3. Finally, in section 5, our primary focus lies on presenting a rigorous proof of nonlinear stability.

In this section, we give some illustrations of notations. Throughout this paper, we  always
assume that $|k| \geq1$ and that $C$ a positive constant independent of $\nu,\mu,k$, which may be different on each line. We
also use $a \sim b$ for $C^{-1}b\leq a\leq Cb$
and $a \lesssim b$ for $a \leq Cb $ for some constants $ C > 0.$

\section{The linearized equation  }
The linearized Boussinesq system in the vicinity of the Poiseuille flow is expressed as follows:
\begin{equation}\label{1.9}
	\left\{
	\begin{aligned}
		&\partial_t u+(1-y^2)\partial_x u+\begin{pmatrix}
			-2y	u^2\\
			0
		\end{pmatrix}-\nu \Delta u+\nabla p=\begin{pmatrix}
			0\\
			\theta
		\end{pmatrix},\\
		&\partial_t\theta+(1-y^2)\partial_x\theta-\mu \Delta \theta=0.\\
	\end{aligned}
	\right.
\end{equation}
The presence of the pressure term in system $\eqref{1.9}_1$ poses challenges in directly addressing the velocity field. By applying the curl  $(\nabla\times )$ on both sides of $\eqref{1.9}_1$, not only can we eliminate the pressure term, but also transform the vector equation into a scalar equation.
The vorticity $\omega$ and the
stream function $\varphi$	 are defined by
$$\omega=\nabla\times u=\partial_y u^1-\partial_x u^2,\quad u=\nabla^{\perp}\varphi=(\partial_y\varphi,-\partial_x \varphi),$$which satisfies
\begin{equation}\label{1.10}
	\left\{
	\begin{aligned}
		&\partial _t\omega+(1-y^2)\partial_x\omega -\nu \Delta \omega+2\partial_x\Delta^{-1}\omega=-\partial_x\theta ,\\
		&\partial_t\theta+(1-y^2)\partial_x\theta -\mu \Delta \theta=0.\\
	\end{aligned}
	\right.
\end{equation}
Taking the Fourier transform in \eqref{1.10}
$$\omega(t,x,y)=\sum_{k\in\mathbb{Z}}\hat{\omega}_k(t,y)e^{ikx}=\sum_{k\in\mathbb{Z}}e^{ikx}(\partial^2_y-k^2)\hat{\varphi}_k(t,y),$$
$$\theta(t,x,y)=\sum_{k\in\mathbb{Z}}\hat{\theta}_k(t,y)e^{ikx},$$
then, we have
\begin{equation}\label{1.11}
	\left\{
	\begin{aligned}
		&\partial_t\hat{\omega}_k(t,y)+\mathcal{\hat{L}}_\nu\hat{\omega}_k(t,y)+ik\hat{\theta}_k=0,\\
		&\partial_t\hat{\theta}_k(t,y)+\mathcal{\hat{H}}_\mu\hat{\theta}_k(t,y)=0,\\
	\end{aligned}
	\right.
\end{equation}
where $$\mathcal{\hat{L}}_\nu=\nu(k^2-\partial^2_y)+ik(1-y^2)+2ik(\partial^2_y-k^2)^{-1},\quad \mathcal{\hat{H}}_\mu=\mu(k^2-\partial^2_y)+ik(1-y^2).$$
The linearized equation is considered under the Navier-slip boundary condition for velocity $$	\partial_y	u^1(t,x,\pm1),\quad u^2(t,x,\pm1)=0,$$ and the Dirichlet boundary condition for temperature
$$\theta(t,x,\pm1)=0.$$
Thus, for $k\neq 0,$ we have 
$$\hat{\varphi}_k(t,\pm1)=\hat{\varphi}''_k(t,\pm1)=0,$$
which also means 
$\hat{\omega}_k(t,\pm1)=0,$ 
and 
$$\hat{\theta}_k(t,\pm1)=0.$$

The most commonly employed approach for investigating stability involves conducting eigenvalue analysis on linearized equations. In other words, we aim to derive formal solutions
$$\hat{\omega}_k(t,y)=\omega_{k}(y)e^{-ikt\lambda},\quad\hat{\varphi}_k(t,y)=\varphi_k(y)e^{-ikt\lambda},\quad \hat{\theta}_k(t,y)=\theta_{k}(y)e^{-ikt\lambda}.$$
Then $\omega_{k}(y)$ and $\theta_{k}(y)$ satisfy the following Orr-Sommerfeld equation
\begin{equation}\label{1.12}
	\left\{
	\begin{aligned}
		&\nu(k^2-\partial^2_y)\omega_{k}(y)+ik(1-y^2-\lambda)\omega_{k}(y)+2ik(\partial^2_y-k^2)^{-1}\omega_{k}(y)=-ik\theta_{k}(y),\\
		&\mu(k^2-\partial^2_y)\theta_{k}(y)+ik(1-y^2-\lambda)\theta_{k}(y)=0,\\
		&\varphi_k(t,\pm1)=\varphi''_k(t,\pm1)=0,\\
		&\theta_k(t,\pm1)=0.
	\end{aligned}
	\right.
\end{equation}
If there exists a nontrivial solution of \eqref{1.12} 
for $\lambda\in\mathbb{C}, k > 0 $ with $Im\lambda > 0$, we say that the flow is linearly unstable.

Firstly, for the linearized vorticity equation of the system of \eqref{1.12}, due to the Navier-slip boundary condition  and the nonlocal term, the key point is to estabilish the resolvent estimate for the inhomogeneous Orr-Sommerfeld equation
\begin{equation}\label{1.13}
	\left\{
	\begin{aligned}
		&\nu(k^2-\partial^2_y)\omega_{k}(y)+ik(1-y^2-\lambda)\omega_{k}(y)+2ik(\partial^2_y-k^2)^{-1}\omega_{k}(y)=F,\\
		&(\partial^2_y-k^2)\varphi_k(y)=\omega_k(y),\\
		&\varphi_k(t,\pm1)=\varphi''_k(t,\pm1)=0.\
	\end{aligned}
	\right.
\end{equation}
Fortunately, this resolvent estimate of the \eqref{1.13} is given in the \cite{ref52}. In equation \eqref{1.12}, the buoyancy term is a linear term, distinguishing it from the linearized operator in the Navier-Stokes equation. Fortunately, the linearized temperature equation is decoupled from the entire system, allowing us to still utilize the linearized operator from the Navier-Stokes equation. In this paper, we can obtain linear estimates of velocity and vorticity using the same method as \cite{ref53}. To streamline this paper, we will employ certain linear estimates from \cite{ref53} as a black box.

Secondly, for the linearized temperature equation of the system of \eqref{1.12}, 
\begin{equation}\label{4.1}
	\left\{
	\begin{aligned}
		&\mu(k^2-\partial^2_y)\theta_{k}(y)+ik(1-y^2-\lambda)\theta_{k}(y)=F,\\
		&	\theta_k(t,\pm1)=0,\\
	\end{aligned}
	\right.
\end{equation}
thanks to  the good  boundary condition  and the lack of  the  nonlocal term, we use the basic resolvent  estimates  to estabilish the space-time estimates, which can be seen
at Section \ref{sec4.2} for more details.

\section{ Resolvent estimates for Orr-Sommerfeld equation}

In this section, we study the resolvent estimates for the Orr-Sommerfeld equation around the plane Poiseuille flow as follows
\begin{equation}\label{1.15}
\left\{\begin{array}{l}
-\nu\left(\partial_y^2-|k|^2\right) w+i k\left[\left(1-y^2-\lambda\right) w+2 \varphi\right]=F, \\
\left(\partial_y^2-|k|^2\right) \varphi=w, \varphi( \pm 1)=w( \pm 1)=0,
\end{array}\right.
\end{equation}
where $k \neq 0$ and $F \in H_0^1$.
The resolvent estimates for \eqref{1.15} are stated as follows.

\begin{proposition} Let $w$ be the solution to \eqref{1.15}, then it holds that
$$
\begin{aligned}
& \nu^{\frac{3}{8}}|k|^{\frac{5}{8}}\left(\|w\|_{L^1}+|k|^{\frac{1}{2}}\|u\|_{L^2}\right) \\
& \quad+(\nu|k|)^{\frac{1}{2}}\|w\|_{L^2}+\nu^{\frac{3}{4}}|k|^{\frac{1}{4}}\left\|\left(\partial_y,|k|\right) w\right\|_{L^2} \lesssim\|F\|_{L^2}, \\
& \nu^{\frac{3}{4}}|k|^{\frac{1}{4}}\|w\|_{L^2}+\nu\left\|\left(\partial_y,|k|\right) w\right\|_{L^2}+(\nu|k|)^{\frac{1}{2}}\|u\|_{L^2} \lesssim\|F\|_{H_k^{-1}}, \\
& \quad|\nu / k|^{\frac{1}{2}}\left\|\left(\partial_y,|k|\right) w\right\|_{L^2}+|\nu / k|^{\frac{1}{4}}\|w\|_{L^2} \\
& \quad+|\nu / k|^{\frac{1}{8}}\|u\|_{L^{\infty}}+\|u\|_{L^2} \lesssim|k|^{-1}\left\|\left(\partial_y,|k|\right) F\right\|_{L^2} .
\end{aligned}
$$
Moreover, there holds that
$$
\begin{aligned}
& \nu^{\frac{1}{6}}|k|^{\frac{5}{6}}\left(|\lambda-1|^{\frac{1}{2}}+|\nu / k|^{\frac{1}{4}}\right)^{\frac{1}{3}}\|u\|_{L^2} \\
& \quad+\nu^{\frac{2}{3}}|k|^{\frac{1}{3}}\left(|\lambda-1|^{\frac{1}{2}}+|\nu / k|^{\frac{1}{4}}\right)^{\frac{1}{3}}\left\|\left(\partial_y,|k|\right) w\right\|_{L^2} \\
& \quad+\nu^{\frac{1}{3}}|k|^{\frac{2}{3}}\left(|\lambda-1|^{\frac{1}{2}}+|\nu / k|^{\frac{1}{4}}\right)^{\frac{2}{3}}\|w\|_{L^2} \lesssim\|F\|_{L^2}, \\
& \nu^{\frac{2}{3}}|k|^{\frac{1}{3}}\left(|\lambda-1|^{\frac{1}{2}}+|\nu / k|^{\frac{1}{4}}\right)^{\frac{1}{3}}\|w\|_{L^2} \lesssim\|F\|_{H_k^{-1}} .
\end{aligned}
$$

Here, $\|F\|_{H_k^{-1}}:=\inf _{\left\{g \in H_0^1:\|g\|_{H_k^1}=1\right\}}\langle f, g\rangle$ with $\|g\|_{H_k^1}=\left\|\left(\partial_y,|k|\right) g\right\|_{L^2}$.
\end{proposition}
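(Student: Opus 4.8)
The plan is to derive the whole display from a single family of weighted $L^2$ resolvent bounds for the degenerate Airy operator $-\nu(\partial_y^2-|k|^2)+ik(1-y^2-\lambda)$ and then to recover the remaining norms by duality, interpolation and elliptic regularity, following the scheme of \cite{ref52}. Throughout write $g:=1-y^2-\mathrm{Re}\,\lambda$ and recall that, since $u=\nabla^\perp\varphi$, one has $\|u\|_{L^2}^2=\|(\partial_y,|k|)\varphi\|_{L^2}^2$ and, integrating by parts with $\varphi(\pm1)=0$, $\langle w,\varphi\rangle=-\|u\|_{L^2}^2$. Testing \eqref{1.15} against $\bar w$, the nonlocal contribution $2ik\langle\varphi,w\rangle=-2ik\|u\|_{L^2}^2$ is purely imaginary, so the real part of the identity reads
\[
\nu\|(\partial_y,|k|)w\|_{L^2}^2+k\,\mathrm{Im}(\lambda)\|w\|_{L^2}^2=\mathrm{Re}\langle F,w\rangle .
\]
For $\mathrm{Im}\,\lambda\ge0$ (in particular on the real axis, which is the critical case) the second term is nonnegative and may be discarded, yielding the crude dissipation bound $\nu\|(\partial_y,|k|)w\|_{L^2}^2\lesssim\|F\|_{L^2}\|w\|_{L^2}$; this is the source of every $\nu\|(\partial_y,|k|)w\|_{L^2}$ factor appearing on the left-hand sides.

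The heart of the matter is the enhanced-dissipation bound for $\|w\|_{L^2}$ carrying the sharp weight $\delta:=|\lambda-1|^{\frac12}+|\nu/k|^{\frac14}$, whose geometric meaning drives everything. The critical layer sits at $y_c=\pm\sqrt{1-\mathrm{Re}\,\lambda}$ where $g$ vanishes, and there $g\approx-2y_c(y-y_c)$, so $|y_c|=|\lambda-1|^{\frac12}$ is the slope of $g$ across the layer while $|\nu/k|^{\frac14}$ is the intrinsic diffusive layer width; $\delta$ is the effective half-width, which degenerates to $|\nu/k|^{\frac14}$ precisely when $\lambda\to1$ and the profile touches quadratically at the channel center $y=0$. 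To extract $\delta$ I would test the equation against $g\bar w$: the term $ik\int g^2|w|^2$ is purely imaginary and produces the distance-weighted quantity $k\int g^2|w|^2$, the diffusion term contributes $\nu\int g|\partial_y w|^2$ together with a commutator $\nu\int g'\,\partial_y w\,\bar w$ absorbed by the crude bound (as $g'=-2y$ is bounded), and the nonlocal term $2k\int\varphi\,g\,\bar w$ is a lower-order perturbation controlled via $\|\varphi\|_{L^2}\lesssim|k|^{-2}\|w\|_{L^2}$. Splitting $(-1,1)$ into the layer $\{|y\mp y_c|\lesssim\delta\}$, on whose complement $g^2\gtrsim\delta^2$ gives genuine coercivity, and the layer itself, where one invokes the one-dimensional degenerate Airy estimate, and optimizing, I expect the bound
\[
\nu^{\frac13}|k|^{\frac23}\delta^{\frac23}\|w\|_{L^2}\lesssim\|F\|_{L^2},
\]
together with its derivative and $H_k^{-1}$ analogues. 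In the worst case $\delta\sim|\nu/k|^{\frac14}$ this collapses exactly to the stated $(\nu|k|)^{\frac12}\|w\|_{L^2}\lesssim\|F\|_{L^2}$. The uniform passage between the non-degenerate regime $|y_c|\gtrsim|\nu/k|^{\frac14}$ (locally linear profile, classical $\nu^{\frac13}$ Airy scaling) and the degenerate regime $\lambda\approx1$ (quadratic touching at $y=0$) is the one point where Poiseuille genuinely differs from Couette, and I expect this critical-layer analysis to be the main obstacle.

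Once the weighted $L^2$ bounds are available the remaining norms follow by soft arguments. The $L^1$ bound reflects the concentration of $w$ in a layer of width $\delta$, i.e. $\|w\|_{L^1}\lesssim\delta^{\frac12}\|w\|_{L^2}$; combined with the enhanced-dissipation $L^2$ estimate this produces precisely $\nu^{\frac38}|k|^{\frac58}\|w\|_{L^1}\lesssim\|F\|_{L^2}$. The $H_k^{-1}$ estimates come from the same two testings, now pairing $F$ against the $H_k^1$-normalized dual profile. The velocity bounds follow from $(\partial_y^2-|k|^2)\varphi=w$ by elliptic estimates together with the interpolation $\|u\|_{L^\infty}\lesssim\|u\|_{L^2}^{\frac12}\|(\partial_y,|k|)u\|_{L^2}^{\frac12}$, while the inviscid-damping estimate $\|u\|_{L^2}\lesssim|k|^{-1}\|(\partial_y,|k|)F\|_{L^2}$ is obtained by pairing \eqref{1.15} with $\varphi$ and using the cancellation $\langle w,\varphi\rangle=-\|u\|_{L^2}^2$ to trade a power of $|k|$ for one derivative on $F$. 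Alternatively, since \eqref{1.15} coincides with \eqref{1.13}, one may simply invoke the resolvent estimates of \cite{ref52} directly.
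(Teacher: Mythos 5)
The paper's own ``proof'' of this proposition is the single line ``See \cite{ref52}'', and your closing observation --- that \eqref{1.15} is the same system as \eqref{1.13}, so the resolvent estimates of \cite{ref52} can be invoked verbatim --- is precisely that argument, so you match the paper on the only step it actually takes. Your long direct sketch preceding it (the energy identity, the multiplier $g\bar w$ with $g=1-y^2-\mathrm{Re}\,\lambda$, the layer width $\delta=|\lambda-1|^{\frac12}+|\nu/k|^{\frac14}$, concentration for the $L^1$ norm) is a plausible reconstruction of the strategy carried out in \cite{ref52}, but it is extra material and, as you yourself flag, heuristic exactly at the hard points --- the uniform treatment of the degenerate regime $\lambda\approx 1$, the $L^1$ bound (which requires a weighted Cauchy--Schwarz argument with $\bigl\|(|g|+\delta^2)^{\frac12}w\bigr\|_{L^2}$ rather than support-based concentration, since $w$ is not supported in the critical layer), and the inviscid-damping estimate, where pairing with $\varphi$ leaves the weighted term $ik\langle(1-y^2-\lambda)w,\varphi\rangle$ uncancelled --- so it should be regarded as an outline of \cite{ref52} rather than an independent proof.
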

\begin{proof}
 See \cite{ref52}.
\end{proof}
\section{Space-time estimates of the linearized Boussinesq equation}
The space-time estimates of the linearized two-dimensional Boussinesq equation are established in this section.
\subsection{Space-time estimates for the vorticity}\label{sec 4.1}\

First, we consider  the linearized system for vorticity equation  as follows:
\begin{equation}\label{1.16}
\left\{\begin{array}{l}
\partial_t \omega-\nu\left(\partial_y^2-k^2\right)\omega+i k\left(1-y^2\right) \omega+2ik \varphi=-i k f_1-\partial_y f_2-f_3-f_4, \\
\left(\partial_y^2-k^2\right) \varphi=\omega, \quad u=\left(\partial_y,-i k\right) \varphi, \\
\varphi(t, k, \pm 1)=\omega(t, k, \pm 1)=0, \\
\left.\omega\right|_{t=0}=\omega_0(k, y),
\end{array}\right.
\end{equation}
where $\left.f_3\right|_{y= \pm 1}=0$ and $k \neq 0$.
The space-time estimates for \eqref{1.16} are stated as follows.

\begin{proposition}\label{pro4.1}
	( see \cite{ref53}) Suppose the $\omega_0 \in H_0^1 \cap H^2$, then the solution $\omega$ to \eqref{1.16} enjoys that the following estimates:
$$
\begin{aligned}
& \|\omega\|_{L^{\infty} L^2}^2+\nu\left\|\left(\partial_y,|k|\right) \omega\right\|_{L^2 L^2}^2+(\nu|k|)^{\frac{1}{2}}\|\omega\|_{L^2 L^2}^2+|k|\|u\|_{L^2 L^2}^2 \\
& \lesssim\left\|\Delta_k \omega_0\right\|_{L^2}^2+\nu^{-1}\left\|\left(f_1, f_2\right)\right\|_{L^2 L^2}^2 \\
& \quad+|k|^{-1}\left\|\left(\partial_y,|k|\right) f_3\right\|_{L^2 L^2}^2+\min \left\{(\nu|k|)^{-\frac{1}{2}},\left(\nu|k|^2\right)^{-1}\right\}\left\|f_4\right\|_{L^2 L^2}^2 .
\end{aligned}
$$
Moreover, there exists $0<c \ll 1$ such that
$$
\begin{aligned}
& \left\|e^{c \nu^{\frac{1}{2}} t} \omega\right\|_{L^{\infty} L^2}^2+\nu\left\|e^{c \nu^{\frac{1}{2}} t}\left(\partial_y,|k|\right) \omega\right\|_{L^2 L^2}^2 \\
& +(\nu|k|)^{\frac{1}{2}}\left\|e^{c \nu^{\frac{1}{2}} t} \omega\right\|_{L^2 L^2}^2+|k|\left\|e^{c \nu^{\frac{1}{2}} t} u\right\|_{L^2 L^2}^2 \\
& \lesssim\left\|\Delta_k \omega_0\right\|_{L^2}^2+\nu^{-1}\left\|e^{c \nu^{\frac{1}{2}} t}\left(f_1, f_2\right)\right\|_{L^2 L^2}^2+|k|^{-1}\left\|e^{c\nu^{\frac{1}{2}}t} \left(\partial_y,|k|\right) f_3\right\|_{L^2 L^2}^2 \\
& \quad+\min \left\{(\nu|k|)^{-\frac{1}{2}},\left(\nu|k|^2\right)^{-1}\right\}\left\|e^{c \nu^{\frac{1}{2}} t} f_4\right\|_{L^2 L^2}^2 .
\end{aligned}
$$
\end{proposition}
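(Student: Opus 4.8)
The plan is to deduce both displayed estimates from the resolvent bounds of Section 3 by transferring them from the resolvent variable to space-time through the Fourier transform in $t$, and to recover the $L^\infty_tL^2_y$ norm by a separate energy estimate. I would first record the basic energy identity: multiplying the first equation of \eqref{1.16} by $\overline{\omega}$, integrating in $y$ and taking real parts makes the transport term $ik(1-y^2)\omega$ and the nonlocal term $2ik\varphi$ disappear (both are purely imaginary once we use $\omega=(\partial_y^2-k^2)\varphi$ and integrate by parts, the boundary contributions vanishing because $\varphi=\omega=0$ at $y=\pm1$). This yields
\[
\tfrac12\tfrac{d}{dt}\|\omega\|_{L^2}^2+\nu\|(\partial_y,|k|)\omega\|_{L^2}^2=\mathrm{Re}\,\langle -ikf_1-\partial_yf_2-f_3-f_4,\ \omega\rangle,
\]
and after integrating in time and bounding the right-hand side by duality (integrating $\partial_yf_2$ and $f_3$ by parts, using $f_3|_{y=\pm1}=0$) it controls $\|\omega\|_{L^\infty L^2}^2$. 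The genuinely enhanced quantities $(\nu|k|)^{1/2}\|\omega\|_{L^2L^2}^2$ and the inviscid-damping norm $|k|\|u\|_{L^2L^2}^2$ receive no improvement from this estimate and must instead come from the resolvent bounds.

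For those, extend $\omega$ by zero to $t<0$, so that on $\mathbb{R}$ it solves $\partial_t\omega+\hat{\mathcal{L}}_\nu\omega=\mathbf 1_{t>0}G+\omega_0\delta(t)$ with $G=-ikf_1-\partial_yf_2-f_3-f_4$ and $\hat{\mathcal{L}}_\nu$ the operator of \eqref{1.11}. Taking $W(\lambda,y)=\int_{\mathbb{R}}\omega(t,y)e^{ik\lambda t}\,dt$ turns this into $(\hat{\mathcal{L}}_\nu-ik\lambda)W=\widehat G(\lambda)+\omega_0$, which is precisely \eqref{1.15} with $F=\widehat G(\lambda)+\omega_0$. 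Applying the resolvent estimates of Section 3 pointwise in $\lambda\in\mathbb{R}$, then integrating in $\lambda$ and invoking Plancherel in $t$ (with Jacobian $|k|\,d\lambda$), converts each resolvent inequality into the corresponding space-time inequality. The bookkeeping is to pair each forcing piece with the matching resolvent bound: $-ikf_1-\partial_yf_2$ with the $H_k^{-1}$ estimate $\nu\|(\partial_y,|k|)w\|_{L^2}\lesssim\|F\|_{H_k^{-1}}$ (producing $\nu^{-1}\|(f_1,f_2)\|_{L^2L^2}^2$), $f_3$ with the estimate whose right-hand side is $|k|^{-1}\|(\partial_y,|k|)F\|_{L^2}$, and $f_4$ with the $\|F\|_{L^2}$ estimates, whose coefficients generate the weight $\min\{(\nu|k|)^{-1/2},(\nu|k|^2)^{-1}\}$.

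The initial-data contribution $(\hat{\mathcal{L}}_\nu-ik\lambda)^{-1}\omega_0$ is handled by the resolvent-decay identity
\[
(\hat{\mathcal{L}}_\nu-ik\lambda)^{-1}\omega_0=\tfrac{1}{ik\lambda}\bigl[(\hat{\mathcal{L}}_\nu-ik\lambda)^{-1}\hat{\mathcal{L}}_\nu\omega_0-\omega_0\bigr],
\]
which trades one power of $\lambda^{-1}$ (giving square-integrability in $\lambda$ at infinity) for one application of $\hat{\mathcal{L}}_\nu$; together with the uniform resolvent bound on bounded $\lambda$-intervals this renders the $\omega_0$-contribution to every left-hand side norm integrable in $\lambda$ and bounded by $\|\Delta_k\omega_0\|_{L^2}^2$, the two $y$-derivatives inside $\hat{\mathcal{L}}_\nu$ being exactly why the $H^2$-type norm $\|\Delta_k\omega_0\|$ appears on the right rather than $\|\omega_0\|$. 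Finally, the exponentially weighted estimate follows by repeating the argument for $\Omega:=e^{c\nu^{1/2}t}\omega$, which solves $\partial_t\Omega+(\hat{\mathcal{L}}_\nu-c\nu^{1/2})\Omega=e^{c\nu^{1/2}t}G$; in the Fourier picture this evaluates the resolvent at $ik\lambda+c\nu^{1/2}$, slightly off the imaginary axis, where the estimates of Section 3 (which tolerate complex $\lambda$) persist provided $c$ is small, since the enhanced dissipation leaves a spectral strip of width $\gtrsim\nu^{1/2}$.

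I expect the main obstacle to be making this frequency-to-space-time passage rigorous rather than formal: one must know a priori that $\omega\in L^2_tL^2_y$ so that the Fourier transform and Plancherel steps are legitimate, which I would secure by running the argument on a truncated interval $[0,T]$ and passing to the limit, and one must pair the four forcing terms with exactly the right resolvent bound so that the sharp coefficients $\nu^{-1}$, $|k|^{-1}$ and $\min\{(\nu|k|)^{-1/2},(\nu|k|^2)^{-1}\}$ emerge. The second delicate point is the initial-data term, where tracking why the $H^2$-type norm $\|\Delta_k\omega_0\|$ is the correct right-hand side, and checking that the induced $(\nu|k|)^{-1/2}$ and $|k|^{-1}$ weights keep its contribution below $\|\Delta_k\omega_0\|_{L^2}^2$, is the crux.
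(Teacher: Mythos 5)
The paper itself contains no proof of this proposition: it is imported verbatim from \cite{ref53} and used, as announced in Section 2, ``as a black box,'' so there is no internal argument to compare yours against. Judged as a reconstruction of the proof in the cited literature (\cite{ref25,ref35,ref53}), your outline is essentially the correct and standard one: extend in time, take the Fourier transform in $t$ so that \eqref{1.16} becomes the resolvent problem \eqref{1.15} with $F=\widehat G(\lambda)+\omega_0$, apply the Section 3 bounds pointwise in $\lambda$ and conclude by Plancherel; treat the datum via the identity $(\hat{\mathcal{L}}_\nu-ik\lambda)^{-1}\omega_0=(ik\lambda)^{-1}\bigl[(\hat{\mathcal{L}}_\nu-ik\lambda)^{-1}\hat{\mathcal{L}}_\nu\omega_0-\omega_0\bigr]$ for large $|\lambda|$ together with the uniform resolvent bound on bounded $\lambda$-intervals, which is indeed why $\|\Delta_k\omega_0\|_{L^2}$ and the hypothesis $\omega_0\in H^1_0\cap H^2$ appear; recover $L^\infty L^2$ by a supplementary energy estimate; and obtain the weighted version by conjugating with $e^{c\nu^{1/2}t}$.

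Two steps are more delicate than your sketch suggests, and one would fail as literally written. First, the energy step does not close by duality alone: the pairings with $f_3$ and $f_4$ must be absorbed into quantities that only the resolvent step provides, namely one writes $\langle f_3,\omega\rangle=\langle f_3,\Delta_k\varphi\rangle$, integrates by parts using $f_3|_{y=\pm1}=0$ to pair with $u$, and absorbs into $|k|\|u\|_{L^2L^2}^2$, while $\langle f_4,\omega\rangle$ is absorbed into $(\nu|k|)^{1/2}\|\omega\|_{L^2L^2}^2$ and $\nu\|(\partial_y,|k|)\omega\|_{L^2L^2}^2$; so the energy estimate must run \emph{after} the space-time bounds, an ordering you imply but should state. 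Second, ``pair $f_4$ with the $\|F\|_{L^2}$ estimates'' does not deliver the claimed weight for the $u$-component: the first resolvent estimate gives only $|k|\|u\|_{L^2}^2\lesssim\nu^{-3/4}|k|^{-5/4}\|f_4\|_{L^2}^2$, and $\nu^{-3/4}|k|^{-5/4}$ is precisely the geometric mean of $(\nu|k|)^{-1/2}$ and $(\nu|k|^2)^{-1}$, hence generically strictly larger than their minimum. To get the branch $(\nu|k|)^{-1/2}$ one must use the $|\lambda-1|$-weighted ``Moreover'' estimate with the lower bound $(|\lambda-1|^{1/2}+|\nu/k|^{1/4})^{1/3}\geq(\nu/|k|)^{1/12}$, which yields $\nu^{1/4}|k|^{3/4}\|u\|_{L^2}\lesssim\|F\|_{L^2}$, while the branch $(\nu|k|^2)^{-1}$ follows from $(\nu|k|)^{1/2}\|w\|_{L^2}\lesssim\|F\|_{L^2}$ combined with the elliptic inequality $|k|\|u\|_{L^2}\lesssim\|w\|_{L^2}$. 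Finally, for the weighted estimate, rather than asserting that the resolvent bounds persist at the shifted parameter $ik\lambda+c\nu^{1/2}$ (they are established on the imaginary axis), the clean route is absorption: move $c\nu^{1/2}\Omega$ to the right-hand side as an $f_4$-type forcing and note that $\min\{(\nu|k|)^{-1/2},(\nu|k|^2)^{-1}\}(c\nu^{1/2})^2\leq c^2(\nu|k|)^{1/2}/|k|$, so its contribution is swallowed by $(\nu|k|)^{1/2}\|\Omega\|_{L^2L^2}^2$ once $c$ is small. With these repairs your proposal matches the proof in the cited source; none of the issues is fatal, but the $f_4$-to-$u$ pairing as written is a genuine gap.
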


\subsection{Space-time estimates for $\theta$}\label{sec4.2}\

In this section, we consider the space-time estimates for  temperature as follows:
\begin{equation}\label{1.17}
\left\{
\begin{array}{l}
	\left(\partial_t+\hat{\mathcal{H}}_\mu\right)\hat{\theta}=-i k g_1-\partial_y g_2-g_3,\\
	\hat{\theta}( \pm 1)=0,\left.\quad \hat{\theta} \right|_{t=0}=\hat{\theta}_0\left(k, y\right),
\end{array}
\right.
\end{equation}
where $k \neq 0$ and $\hat{\mathcal{H}}_\mu=-\mu\left(\partial_y^2-|k|^2\right)+i k\left(1-y^2\right)$.
By the standard energy estimates for $\hat{\theta}$, we can easily get the following proposition, which is important for the estimates of the high frequency of $\hat{\theta}$.

\begin{proposition}\label{pro4.2}
	Let $\hat{\theta}$ be a solution of \eqref{1.17} with $\hat{\theta}_{0} \in L^2(-1,1)$ and $g_1, g_2 \in L^2 L^2$. Then there exists a constant $C>0$ independent in $\mu, k$ so that
$$
\|\widehat{\theta}\|_{L^{\infty} L^2}^2+\mu k^2\|\widehat{\theta}\|_{L^2 L^2}^2+\mu\left\|\partial_y \widehat{\theta}\right\|_{L^2 L^2}^2 \leq C \mu^{-1}\left(\left\|g_1\right\|_{L^2 L^2}^2+\left\|g_2\right\|_{L^2 L^2}^2+k^{-2}\|g_3\|_{L^2L^2}^2\right)+\left\|\widehat{\theta}_{0}\right\|_{L^2}^2 .
$$

Proof. Taking $L^2$ inner product between \eqref{1.17} and $\widehat{\theta}$, we get
$$
\left\langle\partial_t \widehat{\theta}, \hat{\theta}\right\rangle-\mu\left\langle\left(\partial_y^2-k^2\right) \widehat{\theta}, \hat{\theta}\right\rangle+\langle i k \left( 1-y^2\right)  \widehat{\theta}, \widehat{\theta}\rangle=\left\langle-i k g_1-\partial_y g_2-g_3, \widehat{\theta}\right\rangle .
$$

By taking the real part and integration by parts in the above equality, we obtain
$$
\begin{aligned}
	\frac{1}{2} \frac{d}{d t}\|\hat{\theta}\|_{L^2}^2+\mu\left\|\partial_y \hat{\theta}\right\|_{L^2}^2+\mu k^2\|\widehat{\theta}\|_{L^2}^2 \leq & C\left\|g_1\right\|_{L^2}\|k \hat{\theta}\|_{L^2}+C\left\|g_2\right\|_{L^2}\left\|\partial_y \widehat{\theta}\right\|_{L^2} +C||g_3||_{L^2}\|\theta\|_{L^2}\\
	\leq & \frac{1}{4} \mu\|k \hat{\theta}\|_{L^2}^2+C \mu^{-1}\left\|g_1\right\|_{L^2}^2+\frac{1}{4} \mu\left\|\partial_y \hat{\theta}\right\|_{L^2}^2 \\
	& +C \mu^{-1}\left\|g_2\right\|_{L^2}^2+C(\mu k^2)^{-1} \|g_3\|_{L^2}^2+\frac{1}{4}\mu k^2\|\theta\|_{L^2}.
\end{aligned}
$$
Thus by integrating in time, we have
$$
\|\widehat{\theta}\|_{L^{\infty} L^2}^2+\mu k^2\|\widehat{\theta}\|_{L^2 L^2}^2+\mu\left\|\partial_y \widehat{\theta}\right\|_{L^2 L^2}^2 \leq C \mu^{-1}\left(\left\|g_1\right\|_{L^2 L^2}^2+\left\|g_2\right\|_{L^2 L^2}^2+k^{-2}\|g_3\|_{L^2L^2}^2\right)+\left\|\hat{\theta}_{0}\right\|_{L^2}^2 .
$$
\end{proposition}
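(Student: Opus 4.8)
The plan is to run a textbook $L^2$ energy estimate, exploiting the skew-adjointness of the transport part and the coercivity of the diffusion part. First I would pair equation \eqref{1.17} with $\hat\theta$ in the $L^2(-1,1)$ inner product and take the real part. The crucial structural observation is that the multiplication operator $ik(1-y^2)$ is skew-adjoint on $L^2$ because $(1-y^2)$ is real, so $\mathrm{Re}\,\langle ik(1-y^2)\hat\theta,\hat\theta\rangle = 0$ and the entire transport term drops out. The time-derivative term contributes $\tfrac12\frac{d}{dt}\|\hat\theta\|_{L^2}^2$, so after taking the real part we are left with
$$
\tfrac12\tfrac{d}{dt}\|\hat\theta\|_{L^2}^2 - \mu\,\mathrm{Re}\,\langle(\partial_y^2-k^2)\hat\theta,\hat\theta\rangle = \mathrm{Re}\,\langle -ikg_1-\partial_y g_2 - g_3,\hat\theta\rangle.
$$

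Next I would integrate the diffusion term by parts. Here the Dirichlet boundary condition $\hat\theta(\pm1)=0$ is exactly what kills the boundary contributions, yielding the two nonnegative dissipation terms $\mu\|\partial_y\hat\theta\|_{L^2}^2 + \mu k^2\|\hat\theta\|_{L^2}^2$ on the left. For the forcing, I would integrate the $\partial_y g_2$ term by parts (again the boundary term vanishes) to move the derivative onto $\hat\theta$, giving $\langle g_2,\partial_y\hat\theta\rangle$; the $-ikg_1$ term pairs against $k\hat\theta$ and $g_3$ pairs directly against $\hat\theta$. Applying Cauchy--Schwarz and Young's inequality to each, I would absorb the factors $\|k\hat\theta\|_{L^2}$, $\|\partial_y\hat\theta\|_{L^2}$, and $\|k\hat\theta\|_{L^2}$ into the corresponding dissipation terms, which is what produces the $\mu^{-1}$ prefactors on $\|g_1\|^2$ and $\|g_2\|^2$.

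The only point requiring slight care is the inhomogeneity $g_3$, which carries neither a $k$ factor nor a derivative. To control $\langle g_3,\hat\theta\rangle$ I would not estimate against $\|\hat\theta\|_{L^2}$ directly — that would produce no small factor — but instead write it as $\langle g_3, \hat\theta\rangle \le (\mu k^2)^{-1/2}\|g_3\|_{L^2}\cdot (\mu k^2)^{1/2}\|\hat\theta\|_{L^2}$ and use Young's inequality so that the $(\mu k^2)^{1/2}\|\hat\theta\|_{L^2}$ part is absorbed by the $\mu k^2\|\hat\theta\|_{L^2}^2$ dissipation, leaving the coefficient $(\mu k^2)^{-1}=\mu^{-1}k^{-2}$ in front of $\|g_3\|_{L^2}^2$. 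This is the source of the $k^{-2}$ weight in the claimed bound. Finally I would integrate the resulting differential inequality in time on $[0,t]$, using $\|\hat\theta\|_{L^2}^2\big|_{t=0}=\|\hat\theta_0\|_{L^2}^2$ and taking the supremum in $t$ to recover the $L^\infty L^2$ norm, which completes the estimate. I do not anticipate any genuine obstacle here; the whole argument is driven by the skew-symmetry cancellation and the boundary condition, with the $g_3$ absorption being the one bookkeeping step worth flagging.
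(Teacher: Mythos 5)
Your proposal is correct and follows essentially the same route as the paper's own proof: an $L^2$ energy estimate in which the real part kills the skew-adjoint transport term $ik(1-y^2)\hat\theta$, the Dirichlet condition $\hat\theta(\pm1)=0$ eliminates boundary terms in the integrations by parts, and Young's inequality absorbs $g_1$, $g_2$ into the dissipation and $g_3$ into the $\mu k^2\|\hat\theta\|_{L^2}^2$ term, producing the $(\mu k^2)^{-1}$ weight. Your treatment of $g_3$ is exactly the paper's bookkeeping step, so there is nothing to add.
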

To deal with the buoyancy term $\partial_x\theta$ in the vorticity equation, we also need to give the following estimate about $\hat{\theta}.$
First, we decompose $\hat{\theta}=\hat{\theta}_I+\hat{\theta}_{H}$, where $\hat{\theta}_I$ solves

\begin{equation}\label{1.18}
\left\{
\begin{array}{l}
	\left(\partial_t+\hat{\mathcal{H}}_\mu\right) \hat{\theta}_I=-i k g_1-\partial_y g_2-g_3 ,\\
	\hat{\theta}_I( \pm 1)=0,\left.\quad \hat{\theta}_I \right|_{t=0}=0,
\end{array}
\right.
\end{equation}
and 
\begin{equation}\label{1.19}
\left\{
\begin{array}{l}
	\left(\partial_t+\hat{\mathcal{H}}_\mu\right) 
	\hat{\theta}_
	H=0 ,\\
	\hat{\theta}_H( \pm 1)=0,\left.\quad \hat{\theta}_H \right|_{t=0}=\hat{\theta}_0\left(k, y\right).
\end{array}
\right.
\end{equation}
For the homogeneous part, $\hat{\theta}_H$, we use the $$\hat{\mathcal{H}}_\mu=\mu(k^2-\partial^2_y)+ik\left( 1-y^2\right) $$
is m-accretive operator.
A closed operator $H$ in a Hilbert space $X$ is called $m$-accretive if 
$$\text{Re}\left\langle Hf,f\right\rangle \geq0\quad\forall f\in X $$  and  the left open half-plane is contained in the resolvent set of $H$ \cite{ref27} with
$$(H + \lambda)^{-1}\in \mathcal{B}(X),\|(H + \lambda)^{-1}\|\leq (\text{Re}\lambda)^{-1} \quad~ \text{for} ~\text{Re}\lambda> 0.$$
Here $\mathcal{B}(X)$ is the set of bounded linear operators on X.
We define
$$\Psi(A)=\inf\left\lbrace \|(A-i\lambda)u\|:u\in D(A),\lambda \in\mathbb{R},||u||=1\right\rbrace .$$
The following Gearhart-Pr\"{u}ss type lemma comes form \cite{ref29}.
\begin{lemma}\label{lem4.3}
	let $A$ is $m$-accretive in a Hilbert space $X$, then for any $t>0$
	$$||e^{-tA}||\leq e^{-t\Psi\left( A\right) +\frac{\pi}{2}}.$$
\end{lemma}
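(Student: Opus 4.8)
The plan is to reduce the semigroup bound to a uniform resolvent estimate along the imaginary axis and then to run the quantitative Gearhart--Pr\"uss mechanism. First I would dispose of the trivial case: since $A$ is $m$-accretive, $-A$ generates a contraction semigroup, so $\|e^{-tA}\|\le 1$ for all $t\ge 0$; because $e^{\pi/2}>1$, the claimed inequality already holds whenever $t\,\Psi(A)\le\pi/2$, and in particular whenever $\Psi(A)=0$. Hence I may assume $\Psi:=\Psi(A)>0$ and $t\Psi>\pi/2$.

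The first substantive step is to convert the definition of $\Psi$ into a resolvent bound on the whole imaginary axis. For every real $\lambda$ the shifted operator $A-i\lambda$ is again $m$-accretive, since $\mathrm{Re}\langle (A-i\lambda)u,u\rangle=\mathrm{Re}\langle Au,u\rangle\ge 0$; thus $A-i\lambda+\varepsilon$ is boundedly invertible with $\|(A-i\lambda+\varepsilon)^{-1}\|\le\varepsilon^{-1}$ for every $\varepsilon>0$. The definition of $\Psi$ supplies the uniform lower bound $\|(A-i\lambda)u\|\ge\Psi\|u\|$ for all $u\in D(A)$, so $A-i\lambda$ is injective with closed range; feeding this lower bound into the approximants $u_\varepsilon=(A-i\lambda+\varepsilon)^{-1}y$ and letting $\varepsilon\to 0^+$ produces a weak limit solving $(A-i\lambda)u=y$, which shows that $A-i\lambda$ is boundedly invertible with $\|(A-i\lambda)^{-1}\|\le\Psi^{-1}$. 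Consequently $\sup_{\lambda\in\mathbb R}\|(A-i\lambda)^{-1}\|\le\Psi^{-1}$.

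With this in hand I would invoke the Gearhart--Pr\"uss argument in quantitative form. Writing $u(t)=e^{-tA}u_0$ with $\|u_0\|=1$, I would localise in time by a smooth cut-off $\chi$ vanishing at the endpoints, set $g=\chi u$, note that $g'+Ag=\chi' u$, and take the $X$-valued Fourier transform in $t$; the imaginary-axis bound yields $\|\hat g(\lambda)\|\le\Psi^{-1}\|\widehat{\chi' u}(\lambda)\|$, and Plancherel converts this into the Poincar\'e-type inequality $\int\|\chi u\|^2\,dt\le\Psi^{-2}\int|\chi'|^2\|u\|^2\,dt$. Combined with the monotonicity $\frac{d}{dt}\|u\|^2=-2\,\mathrm{Re}\langle Au,u\rangle\le 0$ that $m$-accretivity provides, this already yields exponential decay of $\|u(t)\|$.

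The hard part is reaching the \emph{sharp} exponent $\Psi$ together with the explicit prefactor $e^{\pi/2}$, since the naive Plancherel/monotonicity combination loses a constant in the rate. To recover the optimal constant I would instead exploit the analyticity of $s\mapsto(A+s)^{-1}u_0$ on the right half-plane $\{\mathrm{Re}\,s>0\}$, where $m$-accretivity gives $\|(A+s)^{-1}\|\le(\mathrm{Re}\,s)^{-1}$, together with the boundary bound $\Psi^{-1}$ just established on $\mathrm{Re}\,s=0$. Applying a Phragm\'en--Lindel\"of / harmonic-measure estimate to the Laplace representation $u(t)=\frac{1}{2\pi i}\int_{\mathrm{Re}\,s=c}e^{st}(A+s)^{-1}u_0\,ds$ and optimising over the contour, the geometry of the half-plane (opening angle $\pi$) is precisely what generates the factor $\pi/2$ and delivers $\|e^{-tA}\|\le e^{-t\Psi+\pi/2}$. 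This optimisation is the delicate step; I would follow the detailed execution in \cite{ref29}.
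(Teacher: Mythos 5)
Your preliminary steps are correct: the reduction to the case $t\,\Psi(A)>\pi/2$ via the contraction property of $e^{-tA}$, and the upgrade of the definition of $\Psi(A)$ to the uniform resolvent bound $\sup_{\lambda\in\mathbb{R}}\|(A-i\lambda)^{-1}\|\le\Psi(A)^{-1}$ (your $\varepsilon$-regularization and weak-limit argument works, since the graph of the closed operator $A$ is weakly closed; a Neumann-series continuation would do the same job). For calibration, note that the paper itself offers no proof at all: Lemma \ref{lem4.3} is quoted as a black box from \cite{ref29}, so the only meaningful benchmark is Wei's proof there.

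The genuine gap sits at the one step that distinguishes this lemma from the classical Gearhart--Pr\"uss theorem: the sharp rate $\Psi(A)$ together with the universal prefactor $e^{\pi/2}$, and your proposal does not prove it. The cut-off/Plancherel mechanism, as you concede, degrades the constant (optimizing the cut-off over time intervals and iterating yields at best $\|e^{-tA}\|\lesssim e^{-c\,t\Psi(A)}$ with some $c<1$), and the proposed repair via Phragm\'en--Lindel\"of applied to the Laplace representation is a heuristic rather than an argument. Concretely: the inversion integral $\frac{1}{2\pi i}\int_{\mathrm{Re}\,s=c}e^{st}(A+s)^{-1}u_0\,ds$ is not absolutely convergent, since the resolvent has no decay along vertical lines; the bound available to the left of the imaginary axis is $\|(A+s)^{-1}\|\le\left(\Psi(A)-|\mathrm{Re}\,s|\right)^{-1}$ for $-\Psi(A)<\mathrm{Re}\,s\le 0$ (Neumann series from your axis bound), which blows up as the contour approaches $\mathrm{Re}\,s=-\Psi(A)$; and no computation is offered showing that a harmonic-measure interpolation converts this degenerating bound into the clean factor $e^{\pi/2}$ --- the claim that $\pi/2$ reflects the ``opening angle'' of the half-plane is a guess. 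Deferring ``the delicate step'' to \cite{ref29} also mismatches the cited source: Wei's proof is not a contour or complex-analytic argument at all, but a short, elementary time-domain argument that exploits the lower bound $\|(A-i\lambda)f\|\ge\Psi(A)\|f\|$ directly along the semigroup trajectory with a suitably chosen $\lambda\in\mathbb{R}$, the constant $\pi/2$ emerging from a trigonometric quarter-turn consideration rather than from half-plane geometry. So the proposal neither completes the proof on its own terms nor reproduces the argument it ultimately appeals to; only its routine preparatory steps are established.
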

Recalling that 
$$\hat{\mathcal{H}}_\mu=\mu(k^2-\partial^2_y)+ik\left(1-y^2\right) $$is $m$-accretive,
in follow Lemma \ref{lem4.5} that $$\Psi(\hat{{\mathcal{H}}_\mu})\geq (\mu |k|)^{\frac{1}{2}}+\mu k^2,$$
the lemma \ref{lem4.3} give the enhanced dissipation.
\begin{lemma}\label{lem4.4}
Let $\widehat{\theta}_{0} \in L^2(-1,1)$. Then for any $k \in \mathbb{Z}$, there exist constants $C>0$ independent of $\mu, k$ such that
$$
\left\|\widehat{\theta}_H\right\|_{L^\infty L^2}+\mu\|(\partial_y,k)\hat{\theta}_H\|_{L^2L^2}^2 +\left(\mu |k|\right)^{\frac{1}{2}}\left\|\widehat{\theta}_H\right\|_{L^2 L^2}^2\leq C \left\|\widehat{\theta}_{0}\right\|_{L^2}^2.
$$
\end{lemma}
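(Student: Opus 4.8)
The plan is to reduce everything to two soft ingredients: the Gearhart--Pr\"uss semigroup bound of Lemma~\ref{lem4.3}, combined with the spectral lower bound $\Psi(\hat{\mathcal{H}}_\mu)\ge(\mu|k|)^{\frac12}+\mu k^2$ recorded just above (Lemma~\ref{lem4.5}), which handles the $L^\infty L^2$ norm and the enhanced--dissipation term $(\mu|k|)^{\frac12}\|\hat{\theta}_H\|_{L^2L^2}^2$; and a single energy identity for the homogeneous equation, which handles the parabolic term $\mu\|(\partial_y,k)\hat{\theta}_H\|_{L^2L^2}^2$. Since $\hat{\theta}_H$ solves a closed homogeneous evolution, no forcing has to be tracked, so the estimate is essentially a decay estimate plus a dissipation identity.

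First I would write the solution of \eqref{1.19} through the semigroup generated by the $m$-accretive operator $\hat{\mathcal{H}}_\mu$, namely $\hat{\theta}_H(t)=e^{-t\hat{\mathcal{H}}_\mu}\hat{\theta}_0$. Applying Lemma~\ref{lem4.3} together with $\Psi(\hat{\mathcal{H}}_\mu)\ge(\mu|k|)^{\frac12}+\mu k^2$ yields the pointwise--in--time decay
$$\|\hat{\theta}_H(t)\|_{L^2}\le e^{\frac{\pi}{2}}\,e^{-t\left((\mu|k|)^{\frac12}+\mu k^2\right)}\|\hat{\theta}_0\|_{L^2}.$$
Taking the supremum over $t\ge0$ controls $\|\hat{\theta}_H\|_{L^\infty L^2}$. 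Squaring and integrating over $(0,\infty)$, using $\int_0^\infty e^{-2\kappa t}\,dt=\tfrac{1}{2\kappa}$ with $\kappa=(\mu|k|)^{\frac12}+\mu k^2\ge(\mu|k|)^{\frac12}$, I obtain
$$(\mu|k|)^{\frac12}\|\hat{\theta}_H\|_{L^2L^2}^2\le\frac{(\mu|k|)^{\frac12}}{2\left((\mu|k|)^{\frac12}+\mu k^2\right)}\,e^{\pi}\|\hat{\theta}_0\|_{L^2}^2\le\frac{e^{\pi}}{2}\|\hat{\theta}_0\|_{L^2}^2,$$
which is exactly the enhanced--dissipation contribution; the point is that the time-integration gain $1/\kappa$ absorbs the weight $(\mu|k|)^{\frac12}$.

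For the remaining parabolic term I would take the $L^2$ inner product of the first line of \eqref{1.19} with $\hat{\theta}_H$ and extract the real part. Because the coefficient of the transport term $ik(1-y^2)$ is purely imaginary, that term is skew--adjoint and drops out, leaving the clean identity
$$\tfrac{1}{2}\tfrac{d}{dt}\|\hat{\theta}_H\|_{L^2}^2+\mu\|(\partial_y,k)\hat{\theta}_H\|_{L^2}^2=0,$$
which is the homogeneous specialization ($g_1=g_2=g_3=0$) of the computation in Proposition~\ref{pro4.2}. Integrating in time from $0$ to $\infty$ and discarding the nonnegative terminal energy gives $\mu\|(\partial_y,k)\hat{\theta}_H\|_{L^2L^2}^2\le\tfrac{1}{2}\|\hat{\theta}_0\|_{L^2}^2$. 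Summing the three pieces produces the claimed bound.

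The only genuinely substantive input is the spectral lower bound $\Psi(\hat{\mathcal{H}}_\mu)\ge(\mu|k|)^{\frac12}+\mu k^2$, and this is where the structure of the Poiseuille profile $1-y^2$ enters: the degenerate critical point at $y=0$ fixes the enhanced--dissipation rate at the slower value $(\mu|k|)^{\frac12}$ rather than the Couette rate $(\mu k^2)^{1/3}$. That estimate is deferred to Lemma~\ref{lem4.5} and is the real obstacle; within Lemma~\ref{lem4.4} itself everything else is routine semigroup theory and a standard energy identity.
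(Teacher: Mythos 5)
Your proposal is correct and follows essentially the same route as the paper: the Gearhart--Pr\"uss bound of Lemma~\ref{lem4.3} combined with $\Psi(\hat{\mathcal{H}}_\mu)\gtrsim(\mu|k|)^{\frac12}+\mu k^2$ (from the resolvent estimates of Lemma~\ref{lem4.5}) gives the $L^\infty L^2$ decay, time-integration of that decay gives the $(\mu|k|)^{\frac12}\|\hat{\theta}_H\|_{L^2L^2}^2$ term, and the basic energy identity gives the dissipation term. You have in fact supplied the details the paper leaves implicit, and your stated decay rate $e^{-t((\mu|k|)^{1/2}+\mu k^2)}$ is the correct one for the Poiseuille profile, whereas the paper's displayed bound $e^{-c(\nu k^2)^{1/3}t-\nu k^2 t}$ is a typographical slip (Couette rate and wrong diffusivity symbol) inconsistent with its own lower bound on $\Psi$.
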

\begin{proof}

By the Lemma \ref{lem4.3}, we can get 
$$||\hat{\theta}_H||_{L^{\infty}L^2}\leq Ce^{-c(\nu k^2)^{\frac{1}{3}}t-\nu k^2t}||\hat{\theta}_{0}(k,y)||_{L^2}$$
The second inequality is a direct consequence of the first one.
And based the basic energy estimate, we can obtain $$\mu\|(\partial_y,k)\hat{\theta}_H\|_{L^2L^2}^2 \leq C\left\|\widehat{\theta}_{0}\right\|_{L^2}^2.$$
\end{proof}
For the inhomogeneous part, considering the system
\begin{equation}\label{1.20}
	-\mu\left(\partial_y^2-k^2\right) \widehat{\Theta}+i k(1-y^2-\lambda) \widehat{\Theta}=F, \widehat{\Theta}( \pm 1)=0,
\end{equation}
we have the following sharp resolvent estimates for the linearized operator, which is very important for the space-time estimates of $\widehat{\theta}_I$.

\begin{lemma}\label{lem4.5}
	(Proposition 2.6 and Proposition 2.7 in \cite{ref52}) Let $\widehat{\Theta} \in H^2(-1,1)$ be a solution of \eqref{1.20} with $\lambda \in \mathbb{R}$. Then it holds for $F \in L^2(-1,1)$,
$$
\mu^{\frac{3}{4}}|k|^{\frac{1}{4}}\left\|\partial_y \widehat{\Theta}\right\|_{L^2}+\left(\mu |k|\right)^{\frac{1}{2}}\|\widehat{\Theta}\|_{L^2}+\mu^{\frac{3}{4}}|k|^{\frac{1}{4}}\left\|\left(\partial_y,|k|\right) \widehat{\Theta}\right\|_{L^2} \leq C\|F\|_{L^2},
$$
and for $F \in H^{-1}_k(-1,1)$,
$$
\mu\left\|\partial_y \widehat{\Theta}\right\|_{L^2}+\mu^{\frac{3}{4}}|k|^{\frac{1}{4}}\|\widehat{\Theta}\|_{L^2} \leq C\|F\|_{H_k^{-1}}.
$$
\end{lemma}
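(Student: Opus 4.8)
The plan is to read these as quantitative pseudospectral bounds for the non-self-adjoint operator $\widehat{\mathcal H}_\mu-ik\lambda$ with profile $U(y):=1-y^2$, and to reduce everything to the single enhanced resolvent estimate
\beq
(\mu|k|)^{\frac12}\|\widehat\Theta\|_{L^2}\le C\|F\|_{L^2}.
\eeq
Granting this, the rest is cheap. Testing \eqref{1.20} against $\widehat\Theta$ and taking the real part (here $\lambda\in\mathbb R$, so $U-\lambda$ is real) yields the dissipation identity $\mu\|(\partial_y,|k|)\widehat\Theta\|_{L^2}^2=\mathrm{Re}\langle F,\widehat\Theta\rangle\le\|F\|_{L^2}\|\widehat\Theta\|_{L^2}$; inserting the enhanced bound on the right gives $\mu^{\frac32}|k|^{\frac12}\|(\partial_y,|k|)\widehat\Theta\|_{L^2}^2\le C\|F\|_{L^2}^2$, i.e. $\mu^{\frac34}|k|^{\frac14}\|(\partial_y,|k|)\widehat\Theta\|_{L^2}\le C\|F\|_{L^2}$, which already contains $\mu^{\frac34}|k|^{\frac14}\|\partial_y\widehat\Theta\|_{L^2}$. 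Thus the entire first display is a consequence of the enhanced bound plus one energy identity. The imaginary part gives only the first-moment identity $|k|\int(U-\lambda)|\widehat\Theta|^2\,dy=\mathrm{Im}\langle F,\widehat\Theta\rangle$; neither elementary identity alone sees the enhanced rate, because the weight $U-\lambda$ vanishes on the critical layer $\{U=\lambda\}$.

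The crux is therefore the enhanced estimate, which is where the profile geometry enters and which I expect to be the main obstacle. The mechanism is the critical-layer balance: over a layer of width $\delta$ located where $|U-\lambda|$ is smallest, dissipation contributes $\mu\delta^{-2}$ and transport contributes $|k|\,|U-\lambda|$. Since $U=1-y^2$ has a \emph{non-degenerate} critical point at $y=0$ ($U'(0)=0$, $U''=-2\ne0$), the worst case is $\lambda$ near the critical value $U(0)=1$: there $|U-\lambda|\sim\delta^2$ across the layer, so balancing $|k|\delta^2\sim\mu\delta^{-2}$ forces $\delta\sim(\mu/|k|)^{\frac14}$ and effective gap $\mu\delta^{-2}\sim(\mu|k|)^{\frac12}$, exactly the claimed rate. (For $\lambda$ bounded away from the critical value one recovers the faster monotone rate $\mu^{\frac13}|k|^{\frac23}$ via $\delta\sim(\mu/|k|)^{\frac13}$, so the power $\tfrac12$ is the uniform-in-$\lambda$ floor.)

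To make this rigorous I would test \eqref{1.20} against the multiplier $(U-\lambda)\widehat\Theta$. The imaginary part produces the good weighted term $|k|\int(U-\lambda)^2|\widehat\Theta|^2\,dy$ together with a cross term $\mu\,\mathrm{Im}\int U'\,\partial_y\widehat\Theta\,\overline{\widehat\Theta}\,dy$ that is absorbed using the real-part control of $\mu\|\partial_y\widehat\Theta\|_{L^2}^2$. Splitting the integral into the near-layer region $\{|y-y_\ast|\lesssim\delta\}$, where the dissipation gain $\mu\delta^{-2}$ is coercive, and its complement, where $(U-\lambda)^2\gtrsim\delta^4$ makes the weighted term coercive, and then optimizing in $\delta$, delivers the bound. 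The delicate point is uniformity in $\lambda\in\mathbb R$, since the critical layer may coincide with the non-degenerate critical point; a clean alternative is a rescaling–compactness argument, setting $y=y_\ast+\delta z$ so that the rescaled operator converges to the model complex (parabolic-cylinder / harmonic-oscillator) operator whose resolvent bound is explicit, and closing by contradiction.

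Finally, the $H_k^{-1}$ estimate. From the real part alone, $\mu\|(\partial_y,|k|)\widehat\Theta\|_{L^2}^2=\mathrm{Re}\langle F,\widehat\Theta\rangle\le\|F\|_{H_k^{-1}}\|(\partial_y,|k|)\widehat\Theta\|_{L^2}$ gives $\mu\|\partial_y\widehat\Theta\|_{L^2}\le C\|F\|_{H_k^{-1}}$ directly. For the sharp term $\mu^{\frac34}|k|^{\frac14}\|\widehat\Theta\|_{L^2}\le C\|F\|_{H_k^{-1}}$ I would argue by duality: write $\|\widehat\Theta\|_{L^2}=\sup_{\|\psi\|_{L^2}=1}\langle\widehat\Theta,\psi\rangle$, solve the adjoint problem $\widehat{\mathcal H}_\mu^{*}\Phi=\psi$ (the same operator with $ik\to-ik$, hence enjoying the identical $L^2\to H_k^1$ gradient bound $\mu^{\frac34}|k|^{\frac14}\|(\partial_y,|k|)\Phi\|_{L^2}\le C$ proved above), and compute $\langle\widehat\Theta,\psi\rangle=\langle\widehat{\mathcal H}_\mu\widehat\Theta,\Phi\rangle=\langle F,\Phi\rangle\le\|F\|_{H_k^{-1}}\|\Phi\|_{H_k^1}\le C\mu^{-\frac34}|k|^{-\frac14}\|F\|_{H_k^{-1}}$. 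Taking the supremum closes the proof. Combined with the reading $\Psi(\widehat{\mathcal H}_\mu)\gtrsim(\mu|k|)^{\frac12}+\mu k^2$ (the real part gives $\mu k^2\|\widehat\Theta\|_{L^2}\le\|F\|_{L^2}$, the enhanced bound gives the other factor), these are exactly the inputs feeding Lemma \ref{lem4.4} through the Gearhart--Pr\"uss estimate.
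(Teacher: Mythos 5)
First, a framing remark: the paper does not prove this lemma at all --- it is imported verbatim from Propositions 2.6 and 2.7 of \cite{ref52} and used as a black box --- so your attempt must be measured against the strategy of that reference. Your peripheral reductions are correct and do match how such estimates are organized there: since $\lambda\in\mathbb{R}$, pairing \eqref{1.20} with $\widehat\Theta$ and taking real parts gives $\mu\|(\partial_y,|k|)\widehat\Theta\|_{L^2}^2=\mathrm{Re}\langle F,\widehat\Theta\rangle$, so the entire first display does reduce to the single enhanced bound $(\mu|k|)^{1/2}\|\widehat\Theta\|_{L^2}\le C\|F\|_{L^2}$ (the $\mu^{3/4}|k|^{1/4}\|\partial_y\widehat\Theta\|_{L^2}$ term is in fact redundant); the bound $\mu\|\partial_y\widehat\Theta\|_{L^2}\le C\|F\|_{H_k^{-1}}$ is immediate from the same identity; and your duality argument through the adjoint problem (same operator with $k\mapsto-k$, same Dirichlet condition, hence the same $L^2\to H_k^1$ estimate) correctly yields $\mu^{3/4}|k|^{1/4}\|\widehat\Theta\|_{L^2}\le C\|F\|_{H_k^{-1}}$.

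The genuine gap is in the enhanced estimate itself, and it is not merely the $\lambda$-uniformity you flag: the specific absorption you propose for the cross term fails at the claimed exponent. Using only $|U'|\le 2$, the imaginary part of the pairing with $(U-\lambda)\widehat\Theta$ gives $|k|\|(U-\lambda)\widehat\Theta\|_{L^2}^2\lesssim \mu\|\partial_y\widehat\Theta\|_{L^2}\|\widehat\Theta\|_{L^2}+|k|^{-1}\|F\|_{L^2}^2$. In your splitting with $\delta=(\mu/|k|)^{1/4}$ the off-layer region is controlled by $\delta^{-4}\|(U-\lambda)\widehat\Theta\|_{L^2}^2$, and since $\delta^{-4}|k|^{-1}\mu=1$, the cross term enters with \emph{no} small prefactor: it contributes $\|\partial_y\widehat\Theta\|_{L^2}\|\widehat\Theta\|_{L^2}\le\mu^{-1/2}\|F\|_{L^2}^{1/2}\|\widehat\Theta\|_{L^2}^{3/2}$, and Young's inequality then yields only $\|\widehat\Theta\|_{L^2}\lesssim\mu^{-1}\|F\|_{L^2}$, strictly weaker than the target $(\mu|k|)^{-1/2}\|F\|_{L^2}$ since $|k|\ge1$. (For Couette, $U'=1$ and the analogous bookkeeping closes at the $\nu^{1/3}|k|^{2/3}$ rate, which may be why the step looks innocuous.) The missing idea is that for Poiseuille $U'=-2y$ degenerates together with the critical layer: from $y^2=(1-\lambda)-(U-\lambda)$ one has $|U'(y)|\le 2\bigl(|1-\lambda|^{1/2}+|U(y)-\lambda|^{1/2}\bigr)$, so the cross term splits into a piece $\mu\|\partial_y\widehat\Theta\|\,\|(U-\lambda)\widehat\Theta\|^{1/2}\|\widehat\Theta\|^{1/2}$ absorbable into $|k|\|(U-\lambda)\widehat\Theta\|^2$, and a piece weighted by $|1-\lambda|^{1/2}$ that forces a case analysis in $\lambda$ (for $|1-\lambda|$ large the two layers at $\pm\sqrt{1-\lambda}$ are non-degenerate and the rate improves) --- exactly the structure visible in the refined estimates of the paper's Section 3, which carry the factor $\bigl(|\lambda-1|^{1/2}+|\nu/k|^{1/4}\bigr)^{1/3}$. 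Your rescaling--compactness fallback does not sidestep this: the rescaled model is $-\partial_z^2+i(z^2+\beta)$ and one needs a resolvent bound uniform in the shift $\beta\in\mathbb{R}$, i.e., uniform as the two layers merge or separate, which is the same two-scale difficulty rather than a soft-compactness statement. So the architecture of your reduction is right, but the heart of the lemma --- the $(\mu|k|)^{1/2}$ bound uniform in $\lambda$ --- remains unproved in your sketch.
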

\begin{lemma}\label{lem4.6}
Let $\hat{\theta}_I$ be a solution for the inhomogeneous part, we have the following estimate
\[
\begin{aligned}
	&||\hat{\theta}_I||^2_{L^\infty L^2}+(\mu |k|)^{\frac{1}{2}}||\hat{\theta}_I||^2_{L^2L^2}+\mu\|(\partial_y,k)\hat{\theta}_I\|_{L^2L^2}^2\\
	& \lesssim \mu^{-1}\left\|\left(g_1, g_2\right)\right\|_{L^2 L^2}^2+\min \left\{(\mu|k|)^{-1 / 2},\left(\mu|k|^2\right)^{-1}\right\}\left\|g_3\right\|_{L^2 L^2}^2 .
\end{aligned}
\]
\end{lemma}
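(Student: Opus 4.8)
The plan is to split the source according to its two natural types, writing $\hat\theta_I=\hat\theta_{I,12}+\hat\theta_{I,3}$, where $\hat\theta_{I,12}$ and $\hat\theta_{I,3}$ solve \eqref{1.18} (with zero initial data) with right-hand sides $-ikg_1-\partial_y g_2$ and $-g_3$ respectively, and then to combine three ingredients: the energy estimate of Proposition~\ref{pro4.2}, the stationary resolvent bounds of Lemma~\ref{lem4.5} routed through a Fourier transform in time, and the Gearhart--Pr\"uss bound of Lemma~\ref{lem4.3}. The passage from stationary resolvent estimates to space-time estimates is the standard device: I extend $\hat\theta_I$ and the sources by zero to $t<0$ and take the Fourier transform in the time variable $t\mapsto\tau$; since the initial data vanish there is no boundary term, and the transform $\tilde\theta(\tau,y)$ solves exactly the problem \eqref{1.20} with real parameter $\lambda=-\tau/k$ and forcing equal to the transform of the source. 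Plancherel in $\tau$ then turns the $\tau$-uniform resolvent bounds into $L^2_tL^2_y$ bounds.

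For $\hat\theta_{I,12}$, the $L^\infty L^2$ bound and the full dissipation bound $\mu\|(\partial_y,k)\hat\theta_{I,12}\|_{L^2L^2}^2\lesssim\mu^{-1}\|(g_1,g_2)\|_{L^2L^2}^2$ come directly from Proposition~\ref{pro4.2} with $\hat\theta_0=0$ and $g_3=0$. For the enhanced-dissipation piece $(\mu|k|)^{\frac12}\|\hat\theta_{I,12}\|_{L^2L^2}^2$ I use that $\|{-ikg-\partial_y h}\|_{H_k^{-1}}\lesssim\|(g,h)\|_{L^2}$, so the $H_k^{-1}$ estimate of Lemma~\ref{lem4.5} gives $\mu^{\frac34}|k|^{\frac14}\|\tilde\theta(\tau)\|_{L^2}\lesssim\|(\tilde g_1,\tilde g_2)(\tau)\|_{L^2}$, whence $(\mu|k|)^{\frac12}\|\tilde\theta(\tau)\|_{L^2}^2\lesssim\mu^{-1}\|(\tilde g_1,\tilde g_2)(\tau)\|_{L^2}^2$; integrating in $\tau$ and applying Plancherel yields the claimed bound.

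For $\hat\theta_{I,3}$ I repeat the Fourier-in-time reduction with $F=-\tilde g_3$. The $L^2$ estimate of Lemma~\ref{lem4.5} supplies the rate $(\mu|k|)^{-\frac12}$ for both $(\mu|k|)^{\frac12}\|\hat\theta_{I,3}\|_{L^2L^2}^2$ and $\mu\|(\partial_y,k)\hat\theta_{I,3}\|_{L^2L^2}^2$, while the $H_k^{-1}$ estimate together with $\|g_3\|_{H_k^{-1}}\lesssim|k|^{-1}\|g_3\|_{L^2}$ (and Proposition~\ref{pro4.2} for the $\partial_y$ component) gives the alternative rate $(\mu k^2)^{-1}$; since both hold, so does their minimum. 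For the $L^\infty L^2$ bound of $\hat\theta_{I,3}$ I observe that $\hat{\mathcal{H}}_\mu$ is $m$-accretive with $\Psi(\hat{\mathcal{H}}_\mu)\gtrsim(\mu|k|)^{\frac12}+\mu k^2$, the lower bound following from Lemma~\ref{lem4.5} together with the accretivity identity $\mathrm{Re}\langle\hat{\mathcal{H}}_\mu u,u\rangle=\mu\|(\partial_y,k)u\|_{L^2}^2$. Lemma~\ref{lem4.3} then yields $\|e^{-t\hat{\mathcal{H}}_\mu}\|_{L^2\to L^2}\lesssim e^{-c((\mu|k|)^{\frac12}+\mu k^2)t}$, and Duhamel's formula (legitimate here since the forcing carries no derivative) with Cauchy--Schwarz in $s$ gives $\|\hat\theta_{I,3}\|_{L^\infty L^2}^2\lesssim\bigl((\mu|k|)^{\frac12}+\mu k^2\bigr)^{-1}\|g_3\|_{L^2L^2}^2\lesssim\min\{(\mu|k|)^{-\frac12},(\mu k^2)^{-1}\}\|g_3\|_{L^2L^2}^2$. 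Adding the contributions of $\hat\theta_{I,12}$ and $\hat\theta_{I,3}$ closes the estimate.

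The step I expect to be the main obstacle is securing the sharp factor $\min\{(\mu|k|)^{-\frac12},(\mu k^2)^{-1}\}$ on the $g_3$ term in all three norms at once, in particular in $L^\infty L^2$: the energy estimate of Proposition~\ref{pro4.2} only exposes the $(\mu k^2)^{-1}$ rate, so recovering the $(\mu|k|)^{-\frac12}$ rate in the low-frequency regime forces one to exploit the enhanced-dissipation decay of the semigroup via Gearhart--Pr\"uss rather than a bare energy argument. The complementary difficulty is that the $\partial_y g_2$ source obstructs the Duhamel route for $\hat\theta_{I,12}$, which is why that contribution must instead be handled through the $H_k^{-1}$ resolvent bound.
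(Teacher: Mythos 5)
Your proposal is correct, and its skeleton coincides with the paper's: Fourier transform in time with zero extension for $t<0$, reduction to the resolvent problem \eqref{1.20} with real $\lambda=-\tau/k$, decomposition of the source into the $(g_1,g_2)$ part (handled through the $H_k^{-1}$ bound of Lemma \ref{lem4.5}, using $\|-ikg_1-\partial_y g_2\|_{H_k^{-1}}\lesssim\|(g_1,g_2)\|_{L^2}$) and the $g_3$ part (handled through both the $L^2$ and $H_k^{-1}$ bounds, yielding the two rates whose minimum appears in the statement), followed by Plancherel; whether one splits the solution in physical time, as you do, or splits $\widehat{\Theta}=\widehat{\Theta}^{(1)}+\widehat{\Theta}^{(2)}$ after transforming, as the paper does, is immaterial by linearity. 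The one genuinely different ingredient is your treatment of the $L^\infty L^2$ norm of the $g_3$ contribution: you invoke the Gearhart--Pr\"uss bound of Lemma \ref{lem4.3} with $\Psi(\hat{\mathcal{H}}_\mu)\gtrsim(\mu|k|)^{\frac12}+\mu k^2$ and run Duhamel with Cauchy--Schwarz in $s$, which is legitimate precisely because, as you note, no derivative falls on that source. The paper instead stays with a bare energy inequality, estimating $\|g_3\|_{L^2}\|\hat{\theta}_I\|_{L^2}\leq\frac12\bigl((\mu|k|)^{-\frac12}\|g_3\|_{L^2}^2+(\mu|k|)^{\frac12}\|\hat{\theta}_I\|_{L^2}^2\bigr)$ and absorbing the last term after time integration using the already-established enhanced-dissipation bound $(\mu|k|)^{\frac12}\|\hat{\theta}_I\|_{L^2L^2}^2$ -- so the sharp low-frequency rate is smuggled into $L^\infty L^2$ through the $L^2L^2$ estimate rather than through semigroup decay. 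Both closings are valid; the paper's avoids any appeal to the semigroup for the inhomogeneous part (Lemma \ref{lem4.3} is used there only for $\hat{\theta}_H$), while yours is more modular but requires the lower bound on $\Psi(\hat{\mathcal{H}}_\mu)$, which the paper in any case asserts for Lemma \ref{lem4.4}, obtained exactly as you say from the uniform-in-$\lambda$ resolvent bound plus accretivity. Your identification of the $\partial_y g_2$ source as the obstruction to a Duhamel treatment of $\hat{\theta}_{I,12}$, forcing the $H_k^{-1}$ resolvent route, matches the paper's structure as well.
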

\begin{proof}
Taking the  Fourier transform in $t$ 
$$\widehat{\Theta}(\lambda,k,y)=\int_{0}^{+\infty}\hat{\theta}_I(t,k,y)e^{-it\lambda}dt,\quad G_j(\lambda,k,y)=\int_{0}^{+\infty}{g}_j(t,k,y)e^{-it\lambda}dt,j=1,2,3$$
then we have 
$$\mu(k^2-\partial^2_y){\widehat{\Theta}}(\lambda,k,y)+ik(1-y^2+\frac{\lambda}{k}){\widehat{\Theta}}(\lambda,k,y)=-ikG_1-\partial_y G_2+G_3,{\widehat{\Theta}}(\lambda,k,\pm1)=0.$$
Using Plancherel’s theorem, we know that
$$\int_{\mathbb{R}}||{\widehat{\Theta}}(\lambda)||^2_{L^2}d\lambda=\int_{0}^{\infty}||\hat{\theta}_I(t)||^2_{L^2}dt,$$
$$\int_{0}^{\infty}||g_j(t)||^2_{L^2}dt=\int_{\mathbb{R}}||G_j(\lambda)||^2_{L^2}d\lambda\quad j=1,2,3.$$
We first decompose $\widehat{\Theta}(\lambda,k,y)=\widehat{\Theta}^{(1)}+\widehat{\Theta}^{(2)},$
where $\widehat{\Theta}^{(1)}$ and $\widehat{\Theta}^{(2)}$ solve
\begin{equation}
	\left\{
\begin{aligned}
&\mu(k^2-\partial^2_y){\widehat{\Theta}}^{(1)}(\lambda,k,y)+ik(1-y^2+\frac{\lambda}{k}){\widehat{\Theta}}^{(1)}(\lambda,k,y)=-ikG_1-\partial_y G_2, \\
&{\widehat{\Theta}}^{(1)}(\lambda,k,\pm1)=0,
\end{aligned}
\right.
\end{equation}
and 
\begin{equation}
	\left\{
	\begin{aligned}
&\mu(k^2-\partial^2_y){\widehat{\Theta}}^{(2)}(\lambda,k,y)+ik(1-y^2+\frac{\lambda}{k}){\widehat{\Theta}}^{(2)}(\lambda,k,y)= G_3,\\
&{\widehat{\Theta}}^{(2)}(\lambda,k,\pm1)=0.
\end{aligned}
\right.
\end{equation}
By Lemma \ref{lem4.5},
we have $$\mu^{\frac{3}{4}}|k|^{\frac{1}{4}}||\widehat{\Theta}^{(1)}||_{L^2}
\lesssim||G_1,G_2||_{L^2},$$
and 
$$	\mu^{\frac{3}{4}}|k|^{\frac{5}{4}}||\widehat{\Theta}^{(2)}||_{L^2}\lesssim||G_3||_{L^2},$$
or
$$	\mu^{\frac{1}{2}}|k|^{\frac{1}{2}}||\widehat{\Theta}^{(2)}||_{L^2}\lesssim||G_3||_{L^2},$$
so we can get
$$(\mu|k|)^{\frac{1}{2}}||\widehat{\Theta}^{(2)}||_{L^2}^2\leq\min \left\{(\mu|k|)^{-\frac{1}{2}},\left(\mu|k|^2\right)^{-1}\right\}||G_3||_{L^2}^2.$$
Then, by Plancherel’s theorem, we have
\begin{equation}
\begin{aligned}
&	(\mu|k|)^{\frac{1}{2}}||\hat{\theta}_I(t,k,y)||^2_{L^2L^2}
	=(\mu|k|)^{\frac{1}{2}}\big\| ||\widehat{\Theta}(\lambda,k,y)||_{L^2}\big\|^2_{L^2}\\
	&\leq(\mu|k|)^{\frac{1}{2}}\big\| ||\widehat{\Theta}^{1}(\lambda,k,y)||_{L^2}+||\widehat{\Theta}^{2}(\lambda,k,y)||_{L^2}\big\|^2_{L^2}\\
	&\leq (\mu|k|)^{\frac{1}{2}}\big\|\mu^{-\frac{3}{4}}|k|^{-\frac{1}{4}} ||G_1,G_2||_{L^2}\big\|^2_{L^2}+\min \left\{(\mu|k|)^{-\frac{1}{2}},\left(\mu|k|^2\right)^{-1}\right\}\big\|||G_3(\lambda)||_{L^2}\big\|^2_{L^2}\\
	&\leq\mu^{-1}\big\|||G_1(\lambda),G_2(\lambda)||_{L^2}\big\|^2_{L^2}+\min \left\{(\mu|k|)^{-\frac{1}{2}},\left(\mu|k|^2\right)^{-1}\right\}\big\|||G_3(\lambda)||_{L^2}\big\|^2_{L^2}\\
	&\leq\mu^{-1}||g_1,g_2||^2_{L^2L^2}+\min \left\{(\mu|k|)^{-\frac{1}{2}},\left(\mu|k|^2\right)^{-1}\right\}||g_3||^2_{L^2L^2}.\\
\end{aligned}
\end{equation}
Based on the basic energy estimates, then we can obtain
	$$
	\begin{aligned}
		& \mu\left\|\left(\partial_y,|k|\right) \hat{\theta}_I\right\|_{L^2 L^2}^2 \\
		& \lesssim \mu^{-1}\left\|\left(g_1, g_2\right)\right\|_{L^2 L^2}^2+\left(\mu|k|^2\right)^{-1}\left\|g_3\right\|_{L^2 L^2}^2
	\end{aligned}
	$$
	and
	$$
	\begin{aligned}
		& \mu|k|^2\left\|\hat{ \theta}_I\right\|_{L^2 L^2}^2 \\
		& \lesssim\mu^{-1}\left\|\left(g_1, g_2\right)\right\|_{L^2 L^2}^2+\left\|g_3\right\|_{L^2 L^2}\|\hat{ \theta}_I\|_{L^2 L^2} \\
		& \lesssim \nu^{-1}\left\|\left(g_1, g_2\right)\right\|_{L^2 L^2}^2+(\mu|k|)^{-\frac{1}{2}}\left\|g_3\right\|_{L^2 L^2}^2 .
	\end{aligned}
	$$
	we can obtain 
	$$
	\begin{gathered}
		(\mu|k|)^{\frac{1}{2}}\left\|\hat{\theta}_I\right\|_{L^2 L^2}^2+ \mu\|(\partial_y,k)\hat{\theta}_I\|_{L^2L^2}^2\\
		\lesssim \mu^{-1}\left\|\left(g_1, g_2\right)\right\|_{L^2 L^2}^2+\min \left\{(\mu|k|)^{-\frac{1}{2}},\left(\mu|k|^2\right)^{-1}\right\}\left\|g_3\right\|_{L^2 L^2}^2,
	\end{gathered}
	$$
	It remains to bound  $\left\|\hat{\theta_I}\right\|_{L^{\infty} L^2}$.
	Indeed, we have
	$$
	\begin{aligned}
		& \frac{1}{2} \frac{\mathrm{d}}{\mathrm{d} t}\left\|\hat{\theta}_I\right\|_{L^2}^2+\mu\left\|\left(\partial_y,|k|\right) \hat{\theta}_I\right\|_{L^2}^2=\operatorname{Re}\left\langle-i k g_1-\partial_y g_2-g_3,\hat{\theta}_I\right\rangle \\
		& \leq\left\|\left(g_1, g_2\right)\right\|_{L^2}\left\|\left(\partial_y,|k|\right) \hat{\theta}_I\right\|_{L^2}+\left\|g_3\right\|_{L^2}\left\|\hat{\theta}_I\right\|_{L^2} \\
		& \leq \frac{1}{2}\mu \left\|\left(\partial_y,|k|\right) \hat{\theta}_I\right\|_{L^2}^2+\mu^{-1}\left\|\left(g_1, g_2\right)\right\|_{L^2}^2+\left(\mu|k|^2\right)^{-1}\left\|g_3\right\|_{L^2}^2 \\
		&
	\end{aligned}
	$$
	or
	$$
	\begin{aligned}
			& \frac{1}{2} \frac{\mathrm{d}}{\mathrm{d} t}\left\|\hat{\theta}_I\right\|_{L^2}^2+\mu\left\|\left(\partial_y,|k|\right) \hat{\theta}_I\right\|_{L^2}^2=\operatorname{Re}\left\langle-i k g_1-\partial_y g_2-g_3,\hat{\theta}_I\right\rangle \\
		& \leq\left\|\left(g_1, g_2\right)\right\|_{L^2}\left\|\left(\partial_y,|k|\right) \hat{\theta}_I\right\|_{L^2}+\left\|g_3\right\|_{L^2}\left\|\hat{\theta}_I\right\|_{L^2} \\
		\leq &\frac{1}{2}\mu \left\|\left(\partial_y,|k|\right) \hat{\theta}_I\right\|_{L^2}^2+\mu^{-1}\left\|\left(g_1, g_2\right)\right\|_{L^2}^2\\
		& +\frac{1}{2}\left((\mu|k|)^{-1 / 2}\left\|g_3\right\|_{L^2}^2+(\mu|k|)^{\frac{1}{2}}\left\|\hat{\theta}_I\right\|_{L^2}^2\right),
	\end{aligned}
	$$
	which gives that
	$$
	\begin{aligned}
		&\frac{1}{2} \frac{\mathrm{d}}{\mathrm{d} t}\left\|\hat{\theta}_I\right\|_{L^2}^2 +\mu\left\|\left(\partial_y,|k|\right) \hat{\theta}_I\right\|_{L^2}^2\\
		& \lesssim \mu^{-1}\left\|\left(g_1, g_2\right)\right\|_{L^2}^2+\min \left\{(\mu|k|)^{-1 / 2},\left(\mu|k|^2\right)^{-1}\right\}\left\|g_3\right\|_{L^2}^2+(\mu|k|)^{\frac{1}{2}}\left\|\hat{ \theta}_I\right\|_{L^2}^2 .
	\end{aligned}
	$$
	Therefore, we have
	$$
	\begin{aligned}
		& \quad\left\|\hat{\theta}_I\right\|_{L^{\infty} L^2}^2+\mu\left\|\left(\partial_y,|k|\right) \hat{\theta}_I\right\|_{L^2}^2 \\
		& \lesssim \mu^{-1}\left\|\left(g_1, g_2\right)\right\|_{L^2 L^2}^2
		 +\min \left\{(\mu|k|)^{-1 / 2},\left(\mu|k|^2\right)^{-1}\right\}\left\|g_3\right\|_{L^2 L^2}^2+(\mu|k|)^{\frac{1}{2}}\left\|\hat{\theta}_I\right\|_{L^2 L^2}^2 \\
		& \lesssim \mu^{-1}\left\|\left(g_1, g_2\right)\right\|_{L^2 L^2}^2+\min \left\{(\mu|k|)^{-1 / 2},\left(\mu|k|^2\right)^{-1}\right\}\left\|g_3\right\|_{L^2 L^2}^2 .
	\end{aligned}
	$$
	
	The proof is completed.

\end{proof}
Thus, combining Lemma \ref{lem4.4} and Lemma \ref{lem4.6}, we immediately obtain the following space-time estimates of $\widehat{\theta}$.

\begin{proposition}\label{pro4.6}
	Let $\widehat{\theta}$ be a solution of \eqref{1.17} with $\widehat{\theta}_{0} \in L^2(-1,1)$ and $g_1, g_2 \in L^2 L^2$. Then there exists a constant $C>0$ independent in $\mu, k$ such that
\[
\begin{aligned}
	&||\hat{\theta}_I||^2_{L^\infty L^2}+(\mu |k|)^{\frac{1}{2}}||\hat{\theta}||^2_{L^2L^2}+\mu\|(\partial_y,k)\hat{\theta}\|_{L^2L^2}^2\\
	& \lesssim \left\|\widehat{\theta}_{0}\right\|_{L^2}^2+\mu^{-1}\left\|\left(g_1, g_2\right)\right\|_{L^2 L^2}^2+\min \left\{(\mu|k|)^{-1 / 2},\left(\mu|k|^2\right)^{-1}\right\}\left\|g_3\right\|_{L^2 L^2}^2 .
\end{aligned}
\]
\end{proposition}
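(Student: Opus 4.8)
The plan is to exploit the linearity of the forced Cauchy problem \eqref{1.17} through the splitting $\widehat{\theta}=\widehat{\theta}_I+\widehat{\theta}_H$ that has already been introduced, where $\widehat{\theta}_H$ solves the homogeneous problem \eqref{1.19} carrying the full initial datum $\widehat{\theta}_0$, and $\widehat{\theta}_I$ solves the forced problem \eqref{1.18} with zero initial datum. Since the operator $\widehat{\mathcal{H}}_\mu$ acts linearly, adding \eqref{1.18} and \eqref{1.19} reproduces both the forcing $-ikg_1-\partial_y g_2-g_3$ and the initial datum $0+\widehat{\theta}_0=\widehat{\theta}_0$ of \eqref{1.17}; by uniqueness for this linear parabolic problem, $\widehat{\theta}_I+\widehat{\theta}_H$ coincides with $\widehat{\theta}$. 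It therefore suffices to estimate the two pieces separately and recombine.

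For the homogeneous piece I would invoke Lemma \ref{lem4.4}, which controls $\|\widehat{\theta}_H\|_{L^\infty L^2}$, the dissipative quantity $\mu\|(\partial_y,k)\widehat{\theta}_H\|_{L^2L^2}^2$, and the enhanced-dissipation quantity $(\mu|k|)^{1/2}\|\widehat{\theta}_H\|_{L^2L^2}^2$, all by $C\|\widehat{\theta}_0\|_{L^2}^2$; this is exactly the source of the $\|\widehat{\theta}_0\|_{L^2}^2$ term on the right-hand side. For the inhomogeneous piece I would invoke Lemma \ref{lem4.6}, which bounds the same three quantities for $\widehat{\theta}_I$ by $\mu^{-1}\|(g_1,g_2)\|_{L^2L^2}^2+\min\{(\mu|k|)^{-1/2},(\mu|k|^2)^{-1}\}\|g_3\|_{L^2L^2}^2$; this reproduces the two forcing terms on the right-hand side.

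The recombination is then carried out termwise via the triangle inequality and the elementary bound $(a+b)^2\le 2a^2+2b^2$. For each of the three norms on the left I write, for instance, $\|\widehat{\theta}\|_{L^2L^2}^2\lesssim\|\widehat{\theta}_I\|_{L^2L^2}^2+\|\widehat{\theta}_H\|_{L^2L^2}^2$, and likewise $\|\widehat{\theta}\|_{L^\infty L^2}^2\lesssim\|\widehat{\theta}_I\|_{L^\infty L^2}^2+\|\widehat{\theta}_H\|_{L^\infty L^2}^2$ and $\|(\partial_y,k)\widehat{\theta}\|_{L^2L^2}^2\lesssim\|(\partial_y,k)\widehat{\theta}_I\|_{L^2L^2}^2+\|(\partial_y,k)\widehat{\theta}_H\|_{L^2L^2}^2$. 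Inserting the bounds from the two lemmas, adding the resulting inequalities, and absorbing the numerical factors into $C$ yields precisely the claimed estimate.

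Because the genuine analytic effort has already been spent in establishing the two constituent lemmas (the Gearhart--Pr\"uss argument via $m$-accretivity for $\widehat{\theta}_H$, and the resolvent estimate of Lemma \ref{lem4.5} together with Plancherel for $\widehat{\theta}_I$), this final step carries essentially no obstacle. The one point that deserves a word of care is the consistency of the weights across the two lemmas: the multipliers $(\mu|k|)^{1/2}$ and $\mu$ attached to the $L^2L^2$-type norms in Lemma \ref{lem4.4} must match those in Lemma \ref{lem4.6}, so that the triangle inequality can be applied weight-by-weight without sacrificing the sharp dependence on $\mu$ and $|k|$. Since these weights do agree, the combination is immediate and the sharp enhanced-dissipation gain is preserved.
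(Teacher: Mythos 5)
Your proposal is correct and follows essentially the same route as the paper, whose entire proof of Proposition \ref{pro4.6} is the remark that the estimate follows ``immediately'' by combining Lemma \ref{lem4.4} (homogeneous part $\hat{\theta}_H$) and Lemma \ref{lem4.6} (inhomogeneous part $\hat{\theta}_I$) through the decomposition $\hat{\theta}=\hat{\theta}_I+\hat{\theta}_H$ from \eqref{1.18}--\eqref{1.19} and the triangle inequality. If anything, your write-up is slightly more careful than the paper's: you justify the splitting by uniqueness for the linear problem, check that the weights $(\mu|k|)^{\frac{1}{2}}$ and $\mu$ match across the two lemmas, and your conclusion controls $\|\hat{\theta}\|_{L^{\infty}L^2}^2$ for the full solution, which corrects what appears to be a typo ($\|\hat{\theta}_I\|_{L^{\infty}L^2}^2$) in the statement of Proposition \ref{pro4.6}.
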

\section{Nonlinear stability}
In this section, we establish the proof for Theorem \ref{thm1}. Due to the presence of buoyancy $\partial_x \theta$ in the vorticity equation, it becomes necessary to estimate $\left\||D_x|^\frac{1}{8}\theta(t)\right\|_{L^2}$  in order to effectively control the buoyancy term. For the two-dimensional Boussinesq system, global well-posedness is a well-known result when considering data

 $$u_{0}\in H^{\frac{7}{2}+}(\Omega),\quad\theta_{0}\in H^1(\Omega),\quad|D_x|^\frac{1}{8}\theta_0\in H^1(\Omega).$$ 
  The primary focus of Theorem \ref{thm1} lies in addressing the stability problem:
\begin{equation}\label{5.1}
		\sum_{k\in\mathbb{Z}}E_k\leq Cc_0\min\left\lbrace \mu,\nu\right\rbrace ^{\frac{2}{3}},\quad\sum_{k\in\mathbb{Z}}H_k\leq Cc_1\min\left\lbrace \mu,\nu\right\rbrace ^{\frac{31}{24}}.
\end{equation}
Here
\[
E_k:=	\left\{
\begin{aligned}
	&||\hat{\omega}_k||_{L^\infty L^2}+(\nu |k|)^{\frac{1}{4}}||\hat{\omega}_k||_{L^2L^2}+(\nu |k|^2)^{\frac{1}{2}}||\hat{\omega}_k||_{L^2L^2}+|k|^{\frac{1}{2}} ||\hat{u}_k||_{L^2L^2},&k\neq 0,\\
	&\|\bar{\omega}\|_{L^\infty L^2},&k=0,\\
\end{aligned}
\right.
\]
and
\[
H_k:=	\left\{
\begin{aligned}
	&|k|^{\frac{1}{8}}||\hat{\theta}_k||_{L^\infty L^2}+\mu^{\frac{1}{4}} |k|^{\frac{3}{8}}||\hat{\theta}_k||_{L^2L^2}+\mu^{\frac{1}{2}} |k|^{\frac{9}{8}}||\hat{\theta}_k||_{L^2L^2},&k\neq 0,\\
	&\|\bar{\theta}\|_{L^\infty L^2},&k=0.\\
\end{aligned}
\right.
\]
By getting the following Proposition \ref{pro5.1} and then going through  the bootstrap arguements, we can complete the estimates \eqref{5.1}.
\begin{proposition}\label{pro5.1}
	For $k\neq 0,$	it holds that, 
	\begin{equation}\label{5.2}
		E_k\leq||\Delta_k\hat{\omega}_{0}(k,y)||_{L^2}+C\nu^{-\frac{3}{8}}\mu^{-\frac{1}{4}}H_k+C\nu^{-\frac{2}{3}}\sum_{l\in\mathbb{Z}}E_lE_{k-l} ,
	\end{equation}
	and, for $k=0,$
	\begin{equation}\label{5.3}
		E_0\leq C\nu^{-\frac{1}{2}}\sum_{l\in\mathbb{Z}\setminus\{0\}}E_lE_{-l}+||\bar{\omega}_{0}||_{L^2}.
	\end{equation}
	For $H_0,$ it holds that:
	\begin{equation}\label{5.4}
		H_0\lesssim \mu^{-\frac{1}{2}}\sum_{l\in\mathbb{Z}\setminus\{0\}}|l|^{-\frac{1}{8}}E_lH_{-l}+||\bar{\theta}_{0}||_{L^2}.
	\end{equation}
	For $k\neq 0,$ we can get the following result:
	
	\item[1.]$\mu k^2\leq 1,$
	\begin{equation}\label{5.5}
	\begin{aligned}
	&	H_k\lesssim |k|^{\frac{1}{8}}||\hat{\theta}_{0}(k,y)||_{L^2}+\sum_{l\in\mathbb{Z}} \mu^{-\frac{2}{3}}E_lH_{k-l}+\nu^{-\frac{1}{8}}\mu^{-\frac{7}{16}}\sum_{l\in\mathbb{Z}\setminus\left\lbrace 0,k\right\rbrace ,|k-l|\leq\frac{|k|}{2}} E_lH_{k-l};
\end{aligned}
	\end{equation}
	\item[2.]$\mu k^2\geq 1,$
	\begin{equation}\label{5.6}	
		H_k\lesssim|k|^{\frac{1}{8}}||\hat{\theta}_{0}(k,y)||_{L^2}+C\mu^{-\frac{5}{8}}\sum_{l\in\mathbb{Z}}E_lH_{k-l}+\mu^{-\frac{3}{8}}\nu^{-\frac{1}{8}} E_kH_0.\\
	\end{equation}
\end{proposition}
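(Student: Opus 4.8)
The plan is to treat the full nonlinear system \eqref{1.4} as the linearized systems \eqref{1.16} and \eqref{1.17} driven by the nonlinear interaction and the buoyancy as external forcing, and then to invoke the space-time estimates of Proposition \ref{pro4.1} and Proposition \ref{pro4.6} mode by mode. The starting point is the divergence-free structure $\nabla\cdot u=0$, which puts the transport nonlinearities in conservation form,
\[
u\cdot\nabla\omega=\partial_x(u^1\omega)+\partial_y(u^2\omega),\qquad u\cdot\nabla\theta=\partial_x(u^1\theta)+\partial_y(u^2\theta).
\]
Taking the Fourier transform in $x$, the $k$-th mode of each nonlinearity becomes a convolution, e.g.\ $\widehat{(u^1\omega)}_k=\sum_{l}\hat u^1_l\,\hat\omega_{k-l}$, and it is already in the form $ik(\cdot)+\partial_y(\cdot)$ demanded by the forcing $-ikf_1-\partial_y f_2-f_3-f_4$ in \eqref{1.16} and by $-ikg_1-\partial_y g_2-g_3$ in \eqref{1.17}. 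Concretely I set $f_1=\widehat{(u^1\omega)}_k$, $f_2=\widehat{(u^2\omega)}_k$, $f_3=0$, and absorb the buoyancy $-\partial_x\theta$ into $f_4=ik\hat\theta_k$; for the temperature I take $g_1=\widehat{(u^1\theta)}_k$, $g_2=\widehat{(u^2\theta)}_k$, $g_3=0$.

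For the vorticity bound \eqref{5.2} I apply Proposition \ref{pro4.1} with these choices. Taking square roots, the data produces $\|\Delta_k\hat\omega_0\|_{L^2}$, the nonlinear forcing produces $\nu^{-1/2}\|(\widehat{u^1\omega},\widehat{u^2\omega})\|_{L^2L^2}$, and the buoyancy produces $\min\{(\nu|k|)^{-1/4},(\nu|k|^2)^{-1/2}\}\,|k|\,\|\hat\theta_k\|_{L^2L^2}$. This last quantity is exactly what the temperature dissipation controls: splitting at the value $\nu|k|^3=1$ where the two entries of the minimum cross, and comparing against $\mu^{1/4}|k|^{3/8}\|\hat\theta_k\|_{L^2L^2}\le H_k$, one checks that the ratio equals $(\nu|k|^3)^{\pm 1/8}\le 1$ in each regime, which yields the factor $\nu^{-3/8}\mu^{-1/4}$ and hence the summand $C\nu^{-3/8}\mu^{-1/4}H_k$. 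This is exactly the accounting of the last remark of the introduction: roughly $5/8$ of the lost derivative is absorbed by the vorticity dissipation and velocity damping encoded in the $\min$-factor, $1/4$ by the temperature dissipation $(\mu|k|)^{1/4}$, and the remaining $1/8$ is precisely the extra weight built into the definition of $H_k$. The zero-mode bounds \eqref{5.3} and \eqref{5.4} are simpler: for $k=0$ the equations reduce to $\partial_t\bar\omega-\nu\partial_y^2\bar\omega=-\partial_y\overline{(u^2\omega)}$ and $\partial_t\bar\theta-\mu\partial_y^2\bar\theta=-\partial_y\overline{(u^2\theta)}$ (the self-interaction $\bar u\cdot\nabla\bar\omega$ vanishes since $\bar u^2=0$), so a direct $L^2$ energy estimate gives the prefactors $\nu^{-1/2}$ and $\mu^{-1/2}$, while the zero mode of each nonlinearity is the restricted convolution $\sum_{l\ne 0}\hat u^2_l(\cdot)_{-l}$, whence the sums over $l\ne 0$.

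For the temperature estimates \eqref{5.5} and \eqref{5.6} I multiply the square root of Proposition \ref{pro4.6} by $|k|^{1/8}$; since $(\mu|k|)^{1/4}|k|^{1/8}=\mu^{1/4}|k|^{3/8}$ and $\mu^{1/2}|k|\cdot|k|^{1/8}=\mu^{1/2}|k|^{9/8}$, this reproduces the three terms of $H_k$ on the left and leaves $|k|^{1/8}\|\hat\theta_0\|_{L^2}$ and $|k|^{1/8}\mu^{-1/2}\|(\widehat{u^1\theta},\widehat{u^2\theta})\|_{L^2L^2}$ on the right. The essential bookkeeping is where to place the weight $|k|^{1/8}$ inside $\sum_l\hat u_l\hat\theta_{k-l}$: when $|k-l|\ge|k|/2$ one has $|k|^{1/8}\lesssim|k-l|^{1/8}$ and the weight lands on $\hat\theta_{k-l}$, reproducing $H_{k-l}$ directly (the full sum with constant $\mu^{-2/3}$ or $\mu^{-5/8}$); when $|k-l|<|k|/2$ one has $|l|\ge|k|/2$, the weight must instead be supplied by the velocity factor $\hat u_l$ through the inviscid-damping/enhanced-dissipation components of $E_l$, which is the origin of the separate restricted sum with constant $\nu^{-1/8}\mu^{-7/16}$ in \eqref{5.5}. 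The dichotomy $\mu k^2\lessgtr 1$ records whether the temperature dissipation is weak or strong at frequency $k$, hence which entry of $\min\{(\mu|k|)^{-1/2},(\mu|k|^2)^{-1}\}$ is active and which interpolation is sharp; in the strong regime the interaction with the zero temperature mode is isolated as $\mu^{-3/8}\nu^{-1/8}E_kH_0$.

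The main obstacle throughout is the family of bilinear product estimates that pass from the $L^2_tL^2_y$ norms of the convolutions to the energies $E_l$, $E_{k-l}$, $H_{k-l}$ with exactly the advertised powers of $\nu,\mu,|k|$. Each requires interpolating the $y$-dependence---using the Agmon/Gagliardo--Nirenberg inequality $\|f\|_{L^\infty_y}^2\lesssim\|f\|_{L^2_y}\|\partial_y f\|_{L^2_y}$ to control the $L^\infty_y$ factor---balanced against the $L^\infty_tL^2$ versus $L^2_tL^2$ time integrability encoded in $E$ and $H$, and then carefully tracking the weights $(\nu|k|)^{1/4}$, $\mu^{1/4}|k|^{3/8}$ and the damping weight $|k|^{1/2}$. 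Obtaining the exponent $\nu^{-2/3}$ in \eqref{5.2}, rather than the naive $\nu^{-1/2}$ from the forcing coefficient, hinges on spending part of the enhanced-dissipation gain of one factor against the derivative in the other, and the high--low sums must be treated with a Littlewood--Paley type separation so that no derivative is ever placed on a low-frequency factor it cannot afford. Once all these bilinear bounds are established, summing the mode-by-mode inequalities delivers \eqref{5.2}--\eqref{5.6}.
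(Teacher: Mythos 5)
Your overall architecture matches the paper's: mode-by-mode application of Propositions \ref{pro4.1} and \ref{pro4.6} with the convolution nonlinearities as forcing, the conservation-form splitting from $\nabla\cdot u=0$, Gagliardo--Nirenberg interpolation in $y$, a high--low/low--high frequency decomposition, and direct energy estimates for the zero modes. Your treatment of the buoyancy term (as $f_4=ik\hat\theta_k$, with the check $|k|\min\{(\nu|k|)^{-1/4},(\nu|k|^2)^{-1/2}\}\le\nu^{-3/8}|k|^{3/8}$ and conversion to $\nu^{-3/8}\mu^{-1/4}H_k$), of \eqref{5.3}--\eqref{5.4}, and of the high-frequency case \eqref{5.6} is sound. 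The genuine gap is your slot assignment for the $x$-derivative nonlinearities: you place $\widehat{(u^1\omega)}_k$ in the $f_1$ slot and $\widehat{(u^1\theta)}_k$ in the $g_1$ slot, with coefficients $\nu^{-1/2}$ and $|k|^{1/8}\mu^{-1/2}$ respectively, and with these coefficients the constants claimed in \eqref{5.2} and \eqref{5.5} are unreachable. The obstruction is sharpest at the zero-mode interaction $l=0$: since $E_0=\|\bar\omega\|_{L^\infty L^2}$ carries no $L^2_t$ information, in $\bar u^1\hat\omega_k$ the factor $\hat\omega_k$ must be taken in $L^2L^2$, costing an interpolant between $(\nu|k|)^{-1/4}$ and $(\nu^{1/2}|k|)^{-1}$; multiplied by $\nu^{-1/2}$ this gives, at $|k|=O(1)$, at best $\nu^{-3/4}E_0E_k$, strictly worse than the stated $\nu^{-2/3}$ and fatally so, since with $E\sim\min\{\mu,\nu\}^{2/3}$ one gets $\nu^{-3/4+2/3}=\nu^{-1/12}$ and the bootstrap does not close. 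The same happens for $\bar u^1\hat\theta_k$ and the low--high sum in \eqref{5.5}, where your route yields at best $\mu^{-19/24}|k|^{-3/8}$ (or $\mu^{-1/2}\nu^{-1/8}$ by swapping the integrability pairing), above the stated $\mu^{-2/3}$ and $\nu^{-1/8}\mu^{-7/16}$. No choice of interpolation repairs this: starting from $\nu^{-1/2}$ you would need the bilinear factor to cost only $\nu^{-1/6}$, which forces interpolation weight $a=4/3>1$ between the two $L^2L^2$ components of $E$.

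The missing idea --- and the paper's actual move, visible in \eqref{5.7} and \eqref{5.21} --- is to run $ikf^1_k$ and $ikg^1_k$ through the same slot you correctly used for the buoyancy: the $f_4$ slot of Proposition \ref{pro4.1} and the $g_3$ slot of Proposition \ref{pro4.6}, whose resolvent-based coefficient gives $\nu^{-3/8}|k|^{3/8}\|f^1_k\|_{L^2L^2}$ and $\mu^{-3/8}|k|^{1/2}\|g^1_k\|_{L^2L^2}$ (after the extra $|k|^{1/8}$). The price $|k|^{3/8}$ is then exactly cancelled by interpolating $\|\hat\omega_{k-l}\|_{L^2L^2}$ with weights $\frac{5}{6}$ and $\frac{1}{6}$ between $(\nu|k-l|)^{-1/4}E_{k-l}$ and $(\nu^{1/2}|k-l|)^{-1}E_{k-l}$, producing $\nu^{-7/24}|k|^{-3/8}$ and hence $\nu^{-3/8}\cdot\nu^{-7/24}=\nu^{-2/3}$ uniformly in $k$, including the $l=0$ term; the temperature analogue gives $\mu^{-3/8}\cdot\mu^{-7/24}=\mu^{-2/3}$ for the low--high sum and, using $|k|\le\mu^{-1/2}$ in the regime $\mu k^2\le1$, $\mu^{-3/8}\nu^{-1/8}|k|^{1/8}\le\nu^{-1/8}\mu^{-7/16}$ for the high--low sum. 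As written, your proposal is structurally right but proves \eqref{5.2} and \eqref{5.5} only with constants too weak to match the proposition or to close the nonlinear iteration.
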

\begin{proof}
	Proof of \eqref {5.2}.
	The nonlinear problem for vorticity should be recalled, which satisfies
	\[
	\left\{
	\begin{aligned}
		&\partial_t\hat{\omega}_k-\nu(\partial^2_y-k^2)\hat{\omega}_k+ik(1-y^2)\hat{\omega}_k+2ik\hat{\varphi}_k(t,y)=-ik\hat{\theta}_k(t,y)-ikf^1_k-\partial_y f_k^2,\\
		&(\partial^2_y-k^2)\hat{\varphi}_k=\hat{\omega}_k,\\
		&\hat{\varphi}_k(\pm1)=\hat{\varphi}_k''(\pm 1)=0,\\
		&\hat{\omega}_k|_{t=0}=\hat{\omega}_{0}(k,y).
	\end{aligned}
	\right.
	\]
	Set $E_0=||\bar{\omega}||_{L^\infty L^2}$ and $H_0=||\bar{\theta}||_{L^\infty L^2},$ and for $k\neq 0,$
	$$E_k=||\hat{\omega}_k||_{L^\infty L^2}+(\nu |k|)^{\frac{1}{4}}||\hat{\omega}_k||_{L^2L^2}+(\nu |k|^2)^{\frac{1}{2}}||\hat{\omega}_k||_{L^2L^2}+|k|^{\frac{1}{2}} ||\hat{u}_k||_{L^2L^2},$$
	and $$H_k=|k|^{\frac{1}{8}}||\hat{\theta}_k||_{L^\infty L^2}+\mu^{\frac{1}{4}} |k|^{\frac{3}{8}}||\hat{\theta}_k||_{L^2L^2}+\mu^{\frac{1}{2}} |k|^{\frac{9}{8}}||\hat{\theta}_k||_{L^2L^2}$$
	Denoting $$\hat{\omega}_k(t,y)=\dfrac{1}{2\pi}\int_{\mathbb{T}}\omega(t,x,y)e^{-ikx}dx,$$
	and $$f^1_k(t,y)=\sum_{l\in\mathbb{Z}}\hat{u}^1_l(t,y)\hat{\omega}_{k-l}(t,y),f^2_k(t,y)=\sum_{l\in\mathbb{Z}}\hat{u}^2_l(t,y)\hat{\omega}_{k-l}(t,y),$$
	we have 
	$$\partial_t\hat{\omega}_k(t,y)+\nu(k^2-\partial^2_y)\hat{\omega}_k(t,y)+ik(1-y^2)\hat{\omega}_k(t,y)+2ik\hat{\varphi}_k=-ik\hat{\theta}_k(t,y)-ikf^1_k(t,y)-\partial_y f^2_k(t,y).$$
	The statement in Proposition \ref{pro4.1} implies that
\begin{equation}\label{5.7}
	{E}_k \lesssim\left\|\Delta_k \hat{\omega}_0\left( k,y\right)  \right\|_{L^2}+\nu^{-\frac{3}{8}}|k|^{\frac{3}{8}}\left\|\hat{\theta}_k\right\|_{L^2 L^2}+\nu^{-\frac{3}{8}}|k|^{\frac{3}{8}}\left\|f^1_k\right\|_{L^2 L^2}+\nu^{-\frac{1}{2}}\left\|f^2_k\right\|_{L^2 L^2},
\end{equation}
	where we have used the fact that $$|k| \min \left\{(\nu|k|)^{-\frac{1}{4}},\left(\nu|k|^2\right)^{-\frac{1}{2}}\right\} \leq \nu^{-\frac{3}{8}}|k|^{\frac{3}{8}}.$$ For $k \neq 0$, it holds that
\begin{equation}\label{5.8}
	\begin{aligned}
		\left\|f^2_k\right\|_{L^2 L^2} & \leq \sum_{l \in \mathbb{Z}}\left\|\hat{u}^2_l\hat{\omega}_{k-l}\right\|_{L^2 L^2} \\
		& \leq \sum_{l \in \mathbb{Z}}\left\|\hat{u}^2_l\right\|_{L^2 L^{\infty}}\left\| \hat{\omega}_{k-l}\right\|_{L^{\infty} L^2} \\
		& \leq \sum_{l \in \mathbb{Z}}\left\|\hat{u}^2_l\right\|_{L^2 L^2}^{\frac{1}{2}}\left\| \partial_y\hat{u}^2_l\right\|_{L^2 L^2}^{\frac{1}{2}}{E}_{k-l} \\
		& \leq \sum_{l \in \mathbb{Z}}\left(|l|^{-\frac{1}{2}}{E}_l\right)^{\frac{1}{2}}\left(|l|\left\|\hat{u}^1_l\right\|_{L^2 L^2}\right)^{\frac{1}{2}} {E}_{k-l} \\
		& \leq \sum_{l \in \mathbb{Z}}\left(|l|^{-\frac{1}{2}} {E}_l\right)^{\frac{1}{2}}\left(|l|^{-\frac{1}{2}}|l| {E}_l\right)^{\frac{1}{2}}{E}_{k-l} \lesssim \sum_{l \in \mathbb{Z}}{E}_l{E}_{k-l}
	\end{aligned}
\end{equation}
	and
\begin{equation}\label{5.9}
	\begin{aligned}
		\left\|f_k^1\right\|_{L^2 L^2} \leq & \left\|\bar{u}^1\right\|_{L^{\infty} L^{\infty}}\left\| \hat{\omega}_k\right\|_{L^2 L^2}+\left\|\hat{u}^1_k\right\|_{L^2 L^{\infty}}\|\bar{\omega}\|_{L^{\infty} L^2} \\
		& +\sum_{l \in \mathbb{Z} \backslash\{0, k\}}\left\|\hat{u}^1_l\right\|_{L^{\infty} L^{\infty}}\left\| \hat{\omega}_{k-l}\right\|_{L^2 L^2} .
	\end{aligned}
\end{equation}
		It remains to estimate the terms on the right-hand side of \eqref{5.9}.
	According to the Gagliardo-Nirenberg inequality in a bounded domain, we obtain
	\begin{equation}
	\begin{aligned}
		||\hat{u}^1_k||_{L^\infty L^\infty}&\leq C||\hat{u}^1_k||^{\frac{1}{2}}_{L^\infty L^2}||\partial_y\hat{u}^1_k||^{\frac{1}{2}}_{L^\infty L^2}+\|\hat{u}^1_k\|_{L^\infty L^2}\\&
		\leq C||\hat{u}^1_k||^{\frac{1}{2}}_{L^\infty L^2}||\partial_y\hat{u}^1_k||^{\frac{1}{2}}_{L^\infty L^2}+\|\hat{u}^1_k\|^{\frac{1}{2}}_{L^\infty L^2}\|\hat{u}^1_k\|^{\frac{1}{2}}_{L^\infty L^\infty}\\
		&\lesssim ||\hat{u}^1_k||^{\frac{1}{2}}_{L^\infty L^2}||\hat{\omega}_k||^{\frac{1}{2}}_{L^\infty L^2},
	\end{aligned}
\end{equation}
	from $$||\hat{u}_k||^2_{L^2}=||\partial_y\varphi||^2_{L^2}+|k|^2||\varphi||^2_{L^2},$$
	and  $$||\hat{\omega}_k||^2_{L^2}=||\partial_y^2\varphi||^2_{L^2}+|k|^4||\varphi||^2_{L^2}+2|k|^2||\partial_y\varphi||^2_{L^2},$$
	so, we can easy obtain
	$$|k|^2||\hat{u}_k||^2_{L^2}\lesssim||\hat{\omega}_k||^2_{L^2}.$$
	Form which, for $k\neq0,$ we can obtain that 
	\begin{equation}
		||\hat{u}^1_k||_{L^\infty L^\infty}\lesssim|k|^{-\frac{1}{2}}||\hat{\omega}_{k}||_{L^\infty L^2}.
	\end{equation}
	For $l \neq 0, k$, it holds that $|k| \lesssim|l||k-l|$, then
	\begin{equation}
	\begin{aligned}
		& \sum_{l \in \mathbb{Z} \backslash\{0, k\}}\left\|\hat{u}^1_l\right\|_{L^{\infty} L^{\infty}}\left\|\hat{\omega}_{k-l}\right\|_{L^2 L^2} \\
		& \lesssim \sum_{l \in \mathbb{Z} \backslash\{0, k\}}\left\| \hat{u}^1_l\right\|_{L^{\infty} L^2}^{\frac{1}{2}}\left\| \partial_y\hat{u}_l^1\right\|_{L^{\infty} L^2}^{\frac{1}{2}} \\
		& \times\left\| \hat{\omega}_{k-l}\right\|_{L^2 L^2}^{\frac{5}{6}}\left\|\hat{\omega}_{k-1}\right\|_{L^2 L^2}^{\frac{1}{6}} \\
		& \lesssim \sum_{l \in \mathbb{Z} \backslash\{0, k\}}|l|^{-\frac{1}{2}}{E}_l(\nu|k-l|)^{-\frac{1}{4} \cdot \frac{5}{6}}\left(\nu^{\frac{1}{2}}|k-l|\right)^{-\frac{1}{6}}{E}_{k-l} \\
		& \lesssim \nu^{-\frac{7}{24}}|k|^{-\frac{3}{8}} \sum_{l \in \mathbb{Z} \backslash\{0, k\}}|l|^{-\frac{1}{8}} {E}_l {E}_{k-1},\\
		&
	\end{aligned}
\end{equation}
	and
	\begin{equation}
	\begin{aligned}
		& \left\|\bar{u}^1\right\|_{L^{\infty} L^{\infty}}\left\| \hat{\omega}_k\right\|_{L^2 L^2}+\left\|\hat{u}_k^1\right\|_{L^2 L^{\infty}}\|\bar{\omega}\|_{L^{\infty} L^2}\\
		& \lesssim\|\bar{\omega}\|_{L^{\infty} L^2}(\nu|k|)^{-\frac{1}{4} \cdot \frac{5}{6}}\left(\nu^{\frac{1}{2}}|k|\right)^{-\frac{1}{6}} {E}_k \\
		& +\left\|\hat{u}^1_k\right\|_{L^2 L^2}^{\frac{1}{2}}\left\|\hat{\omega}_k\right\|_{L^2 L^2}^{\frac{1}{2}}\|\bar{\omega}\|_{L^{\infty} L^2} \\
		& \lesssim \nu^{-\frac{7}{24}}|k|^{-\frac{3}{8}} {E}_0 {E}_k . \\
		&
	\end{aligned}
\end{equation}
Thus, we have
	\begin{equation}\label{5.14}
	\left\|f_k^1\right\|_{L^2 L^2} \lesssim \nu^{-\frac{7}{24}}|k|^{-\frac{3}{8}}  \sum_{l \in Z}{E}_l {E}_{k-l}.
\end{equation}
	
	Combining \eqref{5.7}, \eqref{5.8} and \eqref{5.14}, we have
	$$
	\begin{aligned}
	{E}_k & \lesssim\left\|\Delta_k\hat{ \omega}_0\left( k, y\right) \right\|_{L^2}+\nu^{-\frac{3}{8}}|k|^{\frac{3}{8}}\left\|\hat{\theta}_k\right\|_{L^2 L^2}+\nu^{-\frac{3}{8}}|k|^{\frac{3}{8}}\left\|f_k^1\right\|_{L^2 L^2}+\nu^{-\frac{1}{2}}\left\|f^2_k\right\|_{L^2 L^2} \\
		& \lesssim\left\|\Delta_k\hat{ \omega}_0\left( k, y\right) \right\|_{L^2}+\nu^{-\frac{3}{8}}\mu^{-\frac{1}{4}}H_k+\nu^{-\frac{2}{3}} \sum_{l \in \mathbb{Z}} {E}_l {E}_{k-l}.
	\end{aligned}
$$
	Proof of \eqref{5.3}.
	The divergence-free condition of velocity $u$,  allows us to obtain $\bar{u}^2(t,y)=0.$
	According to $$\sum_{l\in\mathbb{Z}}\hat{u}_l^1(t,y)i(-l)\hat{u}^1_{-l}(t,y)=0,$$
	we have 
	\begin{equation}\label{5.15}
		\begin{aligned}
			&\partial_t\bar{u}^1(t,y)-\nu\partial^2_y\bar{u}^1(t,y)=-\sum_{l\in\mathbb{Z},l\neq 0}\hat{u}^2_l(t,y)\partial_y\hat{u}^1_{-l}(t,y)\\
			&=-\sum_{l\in\mathbb{Z}\setminus\{0\}}\hat{u}^2_l(t,y)\hat{\omega}^1_{-l}(t,y)=-f_0^2(t,y).\\
		\end{aligned}
	\end{equation}
	By employing integration by parts, H\"{o}lder's inequality, and Young's inequality in equation \eqref{5.15}, we obtain
	
	\[
	\begin{aligned}
		&\left\langle \left( \partial_t-\nu\partial^2_y\right) \bar{u}^1(t,y),-\partial^2_y\bar{u}^1\right\rangle =\dfrac{1}{2}\partial_t||\partial_y\bar{u}^1(t)||^2_{L^2}+\nu||\partial^2_y\bar{u}^1||^2_{L^2}=\left\langle f^2_0,\partial^2_y\bar{u}^1\right\rangle,\\
	\end{aligned}
	\]
	which has  $$\partial_t||\partial_y\bar{u}^1(t)||^2_{L^2}+\nu||\partial^2_y\bar{u}^1||^2_{L^2} \leq  C\nu^{-1}||f^2_0||^2_{L^2},$$
	from the above equation, by combining it with $\partial_y\bar{u}^1(t,y)=\bar{\omega}(t,y),$ then we can deduce that
	$$E_0^2=||\bar{\omega}||^2_{L^\infty L^2}\leq C\nu^{-1}||f^2_0(t,y)||^2_{L^2L^2}+||\bar{\omega}_{0}||^2_{L^2}.$$
	The utilization of \eqref{5.8} allows us to derive the desired result
	$$E_0\leq C\nu^{-\frac{1}{2}}\sum_{l\in\mathbb{Z}}E_lE_{-l}+||\bar{\omega}_{0}||_{L^2}.$$
	Proof of \eqref{5.4}.
	The evolution equation of $\bar{\theta}$ can be derived in a similar manner, taking into account the influence of  $\nabla\cdot u=0$. Consequently, we obtain $u\cdot\nabla\theta=\nabla\cdot(u\theta)$ and 
		\begin{equation}\label{5.16}
			\partial_t\bar{\theta}(t,y)-\mu\partial^2_y\bar{\theta}(t,y)=-\sum_{l\in\mathbb{Z}\setminus\{0\}}\partial_y(\hat{u}^2_l\hat{\theta}_{-l})(t,y)
			=-\partial_yg_0^2(t,y).
		\end{equation}
		By utilizing integration by parts and the H{\"o}lder inequality in equation \eqref{5.16}, we obtain
		\begin{equation}\label{5.17}
			\left\langle \left( \partial_t-\mu\partial^2_y\right) \bar{\theta}(t,y),\bar{\theta}(t,y)\right\rangle =\dfrac{1}{2}\partial_t||\bar{\theta}(t,y)||^2_{L^2}+\mu||\partial_y\bar{\theta}||^2_{L^2}=\left\langle g^2_0,\partial_y\bar{\theta}\right\rangle 
			\leq||g^2_0||_{L^2}||\partial_y\bar{\theta}||_{L^2},
		\end{equation}
		by employing integration with respect to t and Young's inequality in equation  \eqref{5.17}, then we can infer that
		\begin{equation}\label{5.18}
			H_0^2=||\bar{\theta}||^2_{L^\infty L^2}\leq C\mu^{-1}||g^2_0||^2_{L^2}+||\bar{\theta}_{0}||^2_{L^2}.
		\end{equation}
		According to  the Gagliardo-Nirenberg inequality and  the divergence-free condition $\partial_y\hat{u}^2_k=-ik\hat{u}^1_k,$
		we get
		$$||\hat{u}^2_k||_{L^2L^\infty}\leq C||\hat{u}^2_k||^{\frac{1}{2}}_{L^2L^2}||\partial_y\hat{u}^2_k||^{\frac{1}{2}}_{L^2L^2}=C|k|^{\frac{1}{2}}||\hat{u}^2_k||^{\frac{1}{2}}_{L^2L^2}||\hat{u}^1_k||^{\frac{1}{2}}_{L^2L^2}\leq CE_k,$$
		and then, we obtain
		\begin{equation}\label{5.19}
			||g^2_0||_{L^2L^2}\leq\sum_{l\in\mathbb{Z}\setminus\{0\}}||\hat{u}^2_l||_{L^2L^\infty}||\hat{\theta}_{-l}||_{L^\infty L^2}\leq\sum_{l\in\mathbb{Z}\setminus\{0\}}|-l|^{-\frac{1}{8}}E_lH_{-l}.
		\end{equation}
		Therefore, based on equations  $\eqref{5.18}$ and $\eqref{5.19},$  we can conclude that
		$$H_0\lesssim\mu^{-\frac{1}{2}}\sum_{l\in\mathbb{Z}\setminus\{0\}}|l|^{-\frac{1}{8}}E_lH_{-l}+||\bar{\theta}_{0}||_{L^2}.$$
		
		To control the nonlinear term $\|g^2_k\|_{L^2L^2}$ during the estimates of $H_k$, we need to divide
		them into the low-frequency part $\mu k^2 \leq 1$ and the high-frequency part $\mu k^2 > 1$.
		
			Proof of \eqref{5.5}. For the nonlinear problem for temperature, let us recall that
		\begin{equation}
		\left\{
		\begin{array}{l}
			\partial_t\hat{\theta}_k-\mu(\partial_y^2-|k|^2)\hat{\theta}_k+i k\left(1-y^2\right)\hat{\theta}_k=-i k \hat{g}_k^1-\partial_y \hat{g}^2_k,\\
			\hat{\theta}_k( \pm 1)=0,\left.\quad\hat{ \theta}_k \right|_{t=0}=\hat{\theta}_0\left(k, y\right).
		\end{array}
		\right.
	\end{equation}
		Denoting $$\hat{\theta}_k(t,y)=\dfrac{1}{2\pi}\int_{\mathbb{T}}\theta(t,x,y)e^{-ikx}dx,$$
		and $$g^1_k(t,y)=\sum_{l\in\mathbb{Z}}\hat{u}^1_l\hat{\theta}_{k-l}(t,y),\quad g^2_k(t,y)=\sum_{l\in\mathbb{Z}}\hat{u}^2_l\hat{\theta}_{k-l}(t,y),$$
		then we have 
		$$\partial_t\hat{\theta}_k(t,y)+\mu(k^2-\partial^2_y)\hat{\theta}_k(t,y)+ik(1-y^2)\hat{\theta}_k(t,y)=-ikg^1_k(t,y)-\partial_y g^2_k(t,y).$$
		The Proposition \ref{pro4.6} implies that
		\begin{equation}\label{5.21}
			H_k\leq |k|^{\frac{1}{8}}||\hat{\theta}_{0}(k,y)||_{L^2}+C\left(  \mu^{-\frac{3}{8}}|k|^{\frac{1}{2}} ||g^1_k||_{L^2L^2}+\mu^{-\frac{1}{2}}|k|^{\frac{1}{8}}||g^2_k||_{L^2L^2}\right),
		\end{equation}
			where we have used the fact that $$|k| \min \left\{(\nu|k|)^{-\frac{1}{4}},\left(\nu|k|^2\right)^{-\frac{1}{2}}\right\} \leq \nu^{-\frac{3}{8}}|k|^{\frac{3}{8}}.$$
		Noticing that $\bar{u}^2=0$, we have that for $k\neq 0,$
		\begin{equation}\label{5.22}
			\begin{aligned}
				||g^2_k||_{L^2L^2}\leq& \sum_{l\in\mathbb{Z}}\|\hat{u}^2_l\hat{\theta}_{k-l}(t,y)\|_{L^2L^2}\\
				\leq
				&	\sum_{l\in\mathbb{Z} } ||\hat{u}^2_l||_{L^2L^\infty}||\hat{\theta}_{k-l}||_{L^\infty L^2}\\
				\leq&\sum_{l\in\mathbb{Z} } ||\hat{u}^2_l||_{L^2L^2}^{\frac{1}{2}}||\partial_y\hat{u}^2_l\|_{L^2L^2}^{\frac{1}{2}}||\hat{\theta}_{k-l}||_{L^\infty L^2}\\
				\leq&\sum_{l\in\mathbb{Z} } ||\hat{u}^2_l||_{L^2L^2}^{\frac{1}{2}}||l\hat{u}^1_l\|_{L^2L^2}^{\frac{1}{2}}||\hat{\theta}_{k-l}||_{L^\infty L^2}\\
				\leq&\sum_{l\in\mathbb{Z} } E_lH_{k-l}.\\
			\end{aligned}
		\end{equation}
		In addition to, for $g^1_k$ and $k\neq 0$
		\begin{equation}\label{5.23}
			\begin{aligned}
				||g^1_k||_{L^2L^2}&\leq ||\bar{u}^1||_{L^\infty L^\infty}||\hat{\theta}_{k}||_{L^2 L^2}+ ||\hat{u}^1_k||_{L^2 L^\infty}||\bar{\theta}||_{L^\infty L^2}+\sum_{l\in\mathbb{Z},l\neq \left\lbrace 0,k\right\rbrace } ||\hat{u}^1_l\hat{\theta}_{k-l}||_{L^2 L^2}\\
				&\leq  ||\bar{\omega}||_{L^\infty L^2}||\hat{\theta}_{k}||_{L^2 L^2}+\nu^{-\frac{1}{8}}|k|^{-\frac{3}{8}}E_kH_0+\sum_{l\in\mathbb{Z},l\neq \left\lbrace 0,k\right\rbrace } ||\hat{u}^1_l\hat{\theta}_{k-l}||_{L^2 L^2}\\
				&\leq|k|^{-\frac{1}{8}}	(\mu|k|)^{-\frac{1}{4} \cdot \frac{5}{6}}\left(\mu^{\frac{1}{2}}|k|\right)^{-\frac{1}{6}} E_0H_k+\nu^{-\frac{1}{8}}|k|^{-\frac{3}{8}}E_kH_0+\sum_{l\in\mathbb{Z},l\neq \left\lbrace 0,k\right\rbrace } ||\hat{u}^1_l\hat{\theta}_{k-l}||_{L^2 L^2}.\\
					&\leq	\mu^{-\frac{7}{24}}|k|^{-\frac{1}{2}} E_0H_k+\nu^{-\frac{1}{8}}|k|^{-\frac{3}{8}}E_kH_0+\sum_{l\in\mathbb{Z},l\neq \left\lbrace 0,k\right\rbrace } ||\hat{u}^1_l\hat{\theta}_{k-l}||_{L^2 L^2}.\\
			\end{aligned}
		\end{equation}
		The following step involves estimating $$\sum_{l\in\mathbb{Z},l\neq \left\lbrace 0,k\right\rbrace } ||\hat{u}^1_l\hat{\theta}_{k-l}||_{L^2 L^2},$$ 
				we divide it into two parts and get
		\[
		\begin{aligned}
			\sum_{l\in\mathbb{Z}\setminus \left\lbrace 0,k\right\rbrace } ||\hat{u}^1_l\hat{\theta}_{k-l}||_{L^2 L^2}&\leq\sum_{l\in\mathbb{Z}\setminus \left\lbrace 0,k\right\rbrace,|k-l|\leq\frac{|k|}{2}} ||\hat{u}^1_l\hat{\theta}_{k-l}||_{L^2 L^2}+\sum_{l\in\mathbb{Z}\setminus \left\lbrace 0,k\right\rbrace,|k-l|\geq\frac{|k|}{2}} ||\hat{u}^1_l\hat{\theta}_{k-l}||_{L^2 L^2},\\
			&=\text{HL}+\text{LH}.
		\end{aligned}
		\]
		For HL,
		\begin{equation}\label{5.24}
			\begin{aligned}
				\text{	HL}	&\leq\sum_{l\in\mathbb{Z}\setminus\left\lbrace 0,k\right\rbrace, |k-l|\leq\frac{|k|}{2}} ||\hat{u}^1_l||_{L^2 L^\infty}||\hat{\theta}_{k-l}||_{L^\infty L^2}\\
				&\lesssim\sum_{l\in\mathbb{Z}\setminus \left\lbrace 0,k\right\rbrace ,|k-l|\leq\frac{|k|}{2}} \nu^{-\frac{1}{8}}|l|^{-\frac{3}{8}}E_l|k-l|^{-\frac{1}{8}}H_{k-l}\\
				&\lesssim\sum_{l\in\mathbb{Z}\setminus \left\lbrace 0,k\right\rbrace  ,|k-l|\leq\frac{|k|}{2}} \nu^{-\frac{1}{8}}|k|^{-\frac{3}{8}}E_lH_{k-l}\\
			\end{aligned}
		\end{equation}
		and 
		\begin{equation}\label{5.25}
			\begin{aligned}
				\text{	LH}	&\leq\sum_{l\in\mathbb{Z}\setminus \left\lbrace 0,k\right\rbrace  ,|k-l|\geq\frac{|k|}{2}} ||\hat{u}^1_l||_{L^\infty L^\infty}||\hat{\theta}_{k-l}||_{L^2 L^2}\\
				&\lesssim\sum_{l\in\mathbb{Z}\setminus\left\lbrace 0,k\right\rbrace ,|k-l|\geq\frac{|k|}{2}} |l|^{-\frac{1}{2}}|k-l|^{-\frac{1}{8}}(\mu|k-l|)^{-\frac{1}{4} \cdot \frac{5}{6}}\left(\mu^{\frac{1}{2}}|k-l|\right)^{-\frac{1}{6}}E_lH_{k-l}\\
				&\lesssim\sum_{l\in\mathbb{Z}\setminus \left\lbrace 0,k\right\rbrace ,|k-l|\geq\frac{|k|}{2}} \mu^{-\frac{7}{24}}|k|^{-\frac{1}{2}}E_lH_{k-l}.\\
			\end{aligned} 
		\end{equation}
			And then, substituting \eqref{5.25} and \eqref{5.24} into \eqref{5.23}, we get
		\begin{equation}\label{5.26}
			\begin{aligned}
				||g^1_k||_{L^2L^2}&\lesssim|k|^{-\frac{1}{2}}\mu^{-\frac{7}{24}}E_0H_k+\nu^{-\frac{1}{8}}|k|^{-\frac{3}{8}}E_kH_0\\
				&+\sum_{l\in\mathbb{Z}\setminus\left\lbrace 0,k\right\rbrace ,|k-l|\leq\frac{|k|}{2}} \nu^{-\frac{1}{8}}|k|^{-\frac{3}{8}}E_lH_{k-l}+\sum_{l\in\mathbb{Z}\setminus \left\lbrace 0,k\right\rbrace,|k-l|\geq\frac{|k|}{2}} \mu^{-\frac{7}{24}}|k|^{-\frac{1}{2}}E_lH_{k-l}.\\
			\end{aligned}
		\end{equation}
		The combination of equations  \eqref{5.21}, \eqref{5.22} and \eqref{5.26} yields the result for $k\neq 0,$ and $\mu k^2\leq 1$
		\[
		\begin{aligned}
			&	H_k\leq|k|^{\frac{1}{8}}||\hat{\theta}_{0}(k,y)||_{L^2}+C\left(  \mu^{-\frac{3}{8}}|k|^{\frac{1}{2}} ||g^1_k||_{L^2L^2}+\mu^{-\frac{1}{2}}|k|^{\frac{1}{8}}||g^2_k||_{L^2L^2}\right)\\
			&\lesssim|k|^{\frac{1}{8}}||\hat{\theta}_{0}(k,y)||_{L^2}+\mu^{-\frac{2}{3}}E_0H_k+\mu^{-\frac{3}{8}}\nu^{-\frac{1}{8}}|k|^{\frac{1}{8}}E_kH_0\\
			&+\sum_{l\in\mathbb{Z}\setminus\left\lbrace 0,k\right\rbrace ,|k-l|\leq\frac{|k|}{2}} \mu^{-\frac{3}{8}}\nu^{-\frac{1}{8}}|k|^{\frac{1}{8}}E_lH_{k-l}+\sum_{l\in\mathbb{Z}\setminus \left\lbrace 0,k\right\rbrace,|k-l|\geq\frac{|k|}{2}} \mu^{-\frac{2}{3}}E_lH_{k-l}+\mu^{-\frac{1}{2}}|k|^{\frac{1}{8}}\sum_{l\in\mathbb{Z} } E_lH_{k-l}\\
			&\lesssim|k|^{\frac{1}{8}}||\hat{\theta}_{0}(k,y)||_{L^2} +\nu^{-\frac{1}{8}}\mu^{-\frac{7}{16}}\sum_{l\in\mathbb{Z}\setminus\left\lbrace 0,k\right\rbrace ,|k-l|\leq\frac{|k|}{2}} E_lH_{k-l}\\
			&+\sum_{l\in\mathbb{Z}\setminus \left\lbrace 0,k\right\rbrace,|k-l|\geq\frac{|k|}{2}} \mu^{-\frac{2}{3}}E_lH_{k-l}+\mu^{-\frac{9}{16}}\sum_{l\in\mathbb{Z} } E_lH_{k-l}.\\
			&\lesssim|k|^{\frac{1}{8}}||\hat{\theta}_{0}(k,y)||_{L^2}+\sum_{l\in\mathbb{Z}} \mu^{-\frac{2}{3}}E_lH_{k-l}+\nu^{-\frac{1}{8}}\mu^{-\frac{7}{16}}\sum_{l\in\mathbb{Z}\setminus\left\lbrace 0,k\right\rbrace ,|k-l|\leq\frac{|k|}{2}} E_lH_{k-l}\\
		\end{aligned}
		\]
		Proof of \eqref{5.6}.
		For $\mu k^2\geq 1,$ form  Proposition \ref{pro4.2}, we get
		\begin{equation}\label{5.27}
			\begin{aligned}
				&H_k=|k|^{\frac{1}{8}}||\hat{\theta}_k||_{L^\infty L^2}+\mu^{\frac{1}{4}} k^{\frac{3}{8}}||\hat{\theta}_k||_{L^2L^2}+\mu^{\frac{1}{2}} |k|^{\frac{9}{8}}||\hat{\theta}_k||_{L^2L^2}\\
				&=|k|^{\frac{1}{8}}\left( ||\hat{\theta}_k||_{L^\infty L^2}+|k|^{-\frac{2}{8}}(\mu k^2)^{\frac{1}{4}} ||\hat{\theta}_k||_{L^2L^2}+\mu^{\frac{1}{2}} |k|||\hat{\theta}_k||_{L^2L^2} \right)\\
				&\leq|k|^{\frac{1}{8}}\left( ||\hat{\theta}_k||_{L^\infty L^2}+(\mu k^2)^{\frac{1}{2}} ||\hat{\theta}_k||_{L^2L^2}+\mu^{\frac{1}{2}} |k|||\hat{\theta}_k||_{L^2L^2}\right) \\
				&\lesssim|k|^{\frac{1}{8}}\left( ||\hat{\theta}_0(k,y)||_{ L^2}+\mu ^{-\frac{1}{2}} \left( ||g^1_k||_{L^2L^2}+||g^2_k||_{L^2L^2}\right) \right), \\
			\end{aligned}
		\end{equation}
		and then we obtain that $k\neq 0$
			In addition to, for $g^1_k$ and $k\neq 0$
		\begin{equation}\label{5.28}
			\begin{aligned}
				||g^1_k||_{L^2L^2}&\leq ||\bar{u}^1||_{L^\infty L^\infty}||\hat{\theta}_{k}||_{L^2 L^2}+ ||\hat{u}^1_k||_{L^2 L^\infty}||\bar{\theta}||_{L^\infty L^2}+\sum_{l\in\mathbb{Z},l\neq \left\lbrace 0,k\right\rbrace } ||\hat{u}^1_l\hat{\theta}_{k-l}||_{L^2 L^2}\\
				&\leq  ||\bar{\omega}||_{L^\infty L^2}||\hat{\theta}_{k}||_{L^2 L^2}+\nu^{-\frac{1}{8}}|k|^{-\frac{3}{8}}E_kH_0+\sum_{l\in\mathbb{Z},l\neq \left\lbrace 0,k\right\rbrace } ||\hat{u}^1_l\hat{\theta}_{k-l}||_{L^2 L^2}\\
				&\leq|k|^{-\frac{1}{8}}	(\mu|k|)^{-\frac{1}{4} \cdot \frac{5}{6}}\left(\mu^{\frac{1}{2}}|k|\right)^{-\frac{1}{6}} E_0H_k+\nu^{-\frac{1}{8}}|k|^{-\frac{3}{8}}E_kH_0+\sum_{l\in\mathbb{Z},l\neq \left\lbrace 0,k\right\rbrace } ||\hat{u}^1_l\hat{\theta}_{k-l}||_{L^2 L^2}.\\
				&\leq	\mu^{-\frac{7}{24}}|k|^{-\frac{1}{2}} E_0H_k+\nu^{-\frac{1}{8}}|k|^{-\frac{3}{8}}E_kH_0+\sum_{l\in\mathbb{Z},l\neq \left\lbrace 0,k\right\rbrace } ||\hat{u}^1_l\hat{\theta}_{k-l}||_{L^2 L^2}.\\
			\end{aligned}
		\end{equation}
		Whereas for the term $$\sum_{l\in\mathbb{Z},l\neq \left\lbrace 0,k\right\rbrace } ||\hat{u}^1_l\hat{\theta}_{k-l}||_{L^2 L^2},$$
		and the fact $|k|\lesssim|l||k-l|(l\neq\left\lbrace 0,k\right\rbrace )$, we can obtain
			\begin{equation}\label{5.29}
			\begin{aligned}
				\text{	LH}	&\leq\sum_{l\in\mathbb{Z}\setminus \left\lbrace 0,k\right\rbrace  } ||\hat{u}^1_l||_{L^\infty L^\infty}||\hat{\theta}_{k-l}||_{L^2 L^2}\\
				&\lesssim\sum_{l\in\mathbb{Z}\setminus\left\lbrace 0,k\right\rbrace } |l|^{-\frac{1}{2}}|k-l|^{-\frac{1}{8}}(\mu|k-l|)^{-\frac{1}{4} \cdot \frac{5}{6}}\left(\mu^{\frac{1}{2}}|k-l|\right)^{-\frac{1}{6}}E_lH_{k-l}\\
				&\lesssim\sum_{l\in\mathbb{Z}\setminus \left\lbrace 0,k\right\rbrace } \mu^{-\frac{7}{24}}|k|^{-\frac{1}{2}}E_lH_{k-l}.\\
			\end{aligned} 
\end{equation}
And then, we obtain 

	\begin{equation}\label{5.30}
	\begin{aligned}
		||g^1_k||_{L^2L^2}&\lesssim|k|^{-\frac{1}{2}}\mu^{-\frac{7}{24}}E_0H_k+\nu^{-\frac{1}{8}}|k|^{-\frac{3}{8}}E_kH_0
		+\sum_{l\in\mathbb{Z}\setminus \left\lbrace 0,k\right\rbrace} \mu^{-\frac{7}{24}}|k|^{-\frac{1}{2}}E_lH_{k-l}.\\
	\end{aligned}
\end{equation}
For the term $$g^2_k(t,y)=\sum_{l\in\mathbb{Z}}\hat{u}^2_l\hat{\theta}_{k-l}(t,y).$$
Noticing that $\bar{u}^2=0$, we have that for $k\neq 0,$
		\begin{equation}\label{5.31}
			\begin{aligned}
				||g^2_k||_{L^2L^2}\leq& \sum_{l\in\mathbb{Z},}\|\hat{u}^2_l\hat{\theta}_{k-l}\|_{L^2L^2}+\|\hat{u}^2_k\|_{L^2L^\infty}\|\bar{\theta}\|_{L^\infty L^2}\\
				\leq
				&\sum_{l\in\mathbb{Z} ,l\neq \left\lbrace 0,k\right\rbrace } ||\hat{u}^2_l||_{L^\infty L^\infty}||\hat{\theta}_{k-l}||_{L^2 L^2}+ ||\hat{u}^2_k||_{L^2 L^2}^{\frac{1}{2}}||k\hat{u}^1_k\|_{L^2L^2}^{\frac{1}{2}}H_0\\
				\leq&\sum_{l\in\mathbb{Z} } ||\hat{u}^2_l||_{L^\infty L^2}^{\frac{1}{2}}||\partial_y\hat{u}^2_l\|_{L^\infty L^2}^{\frac{1}{2}}||\hat{\theta}_{k-l}||_{L^2 L^2}+|k|^{-\frac{3}{4}}\nu^{-\frac{1}{4}}E_kH_0\\
				\leq&\sum_{l\in\mathbb{Z} } ||\hat{u}^2_l||_{L^\infty L^2}^{\frac{1}{2}}||l\hat{u}^1_l\|_{L^\infty L^2}^{\frac{1}{2}}||\hat{\theta}_{k-l}||_{L^2 L^2}+|k|^{-\frac{3}{4}}\nu^{-\frac{1}{4}}E_kH_0\\
				\leq&\sum_{l\in\mathbb{Z} }|l|^{-\frac{1}{2}}\mu^{-\frac{1}{4}} |k-l|^{-\frac{3}{8}}E_lH_{k-l}+|k|^{-\frac{3}{4}}\nu^{-\frac{1}{4}}E_kH_0.\\
				\leq&\sum_{l\in\mathbb{Z} }\mu^{-\frac{1}{4}} |k|^{-\frac{3}{8}}E_lH_{k-l}+|k|^{-\frac{3}{4}}\nu^{-\frac{1}{4}}E_kH_0.\\
			\end{aligned}
		\end{equation}
						Combining \eqref{5.27}, \eqref{5.30} and \eqref{5.31}, we get 
						for $\mu k^2\geq 1,$ 
						\[
						\begin{aligned}
							H_k&
							\leq|k|^{\frac{1}{8}}\left( ||\hat{\theta}_0(k,y)||_{ L^2}+\mu ^{-\frac{1}{2}} \left( ||g^1_k||_{L^2L^2}+||g^2_k||_{L^2L^2}\right) \right), \\
							&\lesssim|k|^{\frac{1}{8}}||\hat{\theta}_{0}(k,y)||_{L^2}+|k|^{-\frac{3}{8}}\mu^{-\frac{19}{24}}E_0H_k+\mu^{-\frac{1}{2}}\nu^{-\frac{1}{8}}|k|^{-\frac{1}{4}} E_kH_0+\mu^{-\frac{1}{2}}\nu^{-\frac{1}{4}}|k|^{-\frac{5}{8}} E_kH_0\\
							&+\sum_{l\in\mathbb{Z}\setminus \left\lbrace 0,k\right\rbrace} \mu^{-\frac{19}{24}}|k|^{-\frac{3}{8}}E_lH_{k-l}+\mu^{-\frac{3}{4}}|k|^{-\frac{1}{4}}\sum_{l\in\mathbb{Z} } E_lH_{k-l}\\
							&\lesssim|k|^{\frac{1}{8}}||\hat{\theta}_{0}(k,y)||_{L^2}+\mu^{-\frac{9}{16}}\sum_{l\in\mathbb{Z}\setminus\{0\}} E_lH_{k-l}+C\mu^{-\frac{5}{8}}\sum_{l\in\mathbb{Z}}E_lH_{k-l}+\mu^{-\frac{3}{8}}\nu^{-\frac{1}{8}} E_kH_0\\
								&\lesssim|k|^{\frac{1}{8}}||\hat{\theta}_{0}(k,y)||_{L^2}+C\mu^{-\frac{5}{8}}\sum_{l\in\mathbb{Z}}E_lH_{k-l}+\mu^{-\frac{3}{8}}\nu^{-\frac{1}{8}} E_kH_0\\
						\end{aligned}
						\]
						This completes the proof of Proposition \ref{pro5.1}.
					\end{proof}
					Theorem \ref{thm1} is now proven. By utilizing \eqref{5.2} and \eqref{5.3}, we derive
					\begin{equation}\label{5.32}
						\sum_{k\in\mathbb{Z}}E_k\leq\sum_{k\in\mathbb{Z}\setminus\{0\}}||\Delta_k\hat{\omega}_{0}(k,y)||_{L^2}+\|\bar{\omega}_0\|_{L^2}+\sum_{k\in\mathbb{Z}\setminus\{0\} }C\nu^{-\frac{3}{8}}\mu^{-\frac{1}{4}}H_k+C\nu^{-\frac{2}{3}}\sum_{k\in\mathbb{Z}}\sum_{l\in\mathbb{Z}}E_lE_{k-l} .
					\end{equation}
					And by the fact 
					$$\sum_{k\in\mathbb{Z}}H_k=H_0+\sum_{k\in\mathbb{Z}\setminus\{0\}, \mu k^2\leq 1}H_k+\sum_{k\in\mathbb{Z}\setminus\{0\},\mu k^2>1}H_k,$$
					combining \eqref{5.4}, \eqref{5.5} and \eqref{5.6}, we can deduce
					\begin{equation}\label{5.33}
						\begin{aligned}
							&\sum_{k\in\mathbb{Z}}H_k\lesssim||\bar{\theta}_{0}||_{L^2}+\sum_{k\in\mathbb{Z}\setminus\{0\} }|k|^{\frac{1}{8}}||\hat{\theta}_{0}(k,y)||_{L^2} +\mu^{-\frac{2}{3}}\sum_{k\in\mathbb{Z} }\sum_{l\in\mathbb{Z} } E_lH_{k-l}\\
							&+\nu^{-\frac{1}{8}}\mu^{-\frac{7}{16}}\sum_{k\in\mathbb{Z}\setminus\{0\}, \mu k^2\leq 1}\sum_{l\in\mathbb{Z}} E_lH_{k-l}
							+\mu^{-\frac{3}{8}}\nu^{-\frac{1}{8}}\sum_{k\in\mathbb{Z}\setminus\{0\},\mu k^2>1}E_kH_{0}.\\
						\end{aligned}
					\end{equation}
					On the other hand, if 	$$||u_{0}||_{H^{\frac{7}{2}+}}\leq c_0\min\left\lbrace \mu,\nu\right\rbrace ^{\frac{2}{3}},\quad ||\theta_{0}||_{H^1}+|||D_x|^{\frac{1}{8}}\theta_{0}||_{H^1}\leq c_1\min\left\lbrace \mu,\nu\right\rbrace ^{{\frac{31}{24}}},$$
					we have the ability to obtain
					$$\sum_{k\in\mathbb{Z}\setminus\{0\}}||\Delta_k\hat{\omega}_{0}(k,y)||_{L^2}+\|\bar{\omega}_0\|_{L^2}\leq Cc_0\min\left\lbrace \mu,\nu\right\rbrace ^{\frac{2}{3}},$$
					and 
					$$||\bar{\theta}_{0}||_{L^2}+\sum_{k\in\mathbb{Z}\setminus\{0\}}|k|^{\frac{1}{8}}||\hat{\theta}_{0}(k,y)||_{L^2} \leq Cc_1\min\left\lbrace \mu,\nu\right\rbrace ^{\frac{31}{24}}.$$
					Thus, we select $c_0,c_1$ suitably small, by bootstrap arguments,  form \eqref{5.32} and \eqref{5.33}, then we can infer that
					
					$$\sum_{k\in\mathbb{Z}}E_k\leq Cc_0\min\left\lbrace \mu,\nu\right\rbrace ^{\frac{2}{3}},\quad\sum_{k\in\mathbb{Z}}H_k\leq Cc_1\min\left\lbrace \mu,\nu\right\rbrace ^{\frac{31}{24}}.$$
					This completes the proof of Theorem \ref{thm1}.
					
					\textbf{Conflict of Interest}\quad The authors declared that they have no conflict of interest.
					\section{Acknowledgment}
				I appreciate my supervisor Prof. Weike Wang  for suggesting this problem and many valuable discussions.  The work of Gaofeng Wang is supported by National Nature Science Foundation of China 12271357 and 12161141004 and Shanghai Science and Technology Innovation Action
				Plan No. 21JC1403600.
					
				\end{document}